\def\subjclass#1{\par\medskip
\noindent\textbf{Mathematics Subject Classification (2010):} #1}
\def\keywords#1{\par\medskip
\noindent\textbf{Keywords.} #1}
\newcommand{\C}{{\mathbb C}}
\newcommand{\bI}{{\mathbb I}}
\newcommand{\N}{{\mathbb N}}
\newcommand{\R}{{\mathbb R}}
\newcommand{\Z}{{\mathbb Z}}
\renewcommand{\S}{{\mathbb S}}
\newcommand\cC{{\mathcal C}}
\newcommand\cQ{{\mathcal Q}}
\newcommand\cS{{\mathcal S}}
\newcommand\bH{{\mathbb H}}
\newtheorem{theorem}{Theorem}[section]
\newtheorem{prop}{Proposition}[section]
\newtheorem{lemma}{Lemma}[section]
\newtheorem{example}{Example}[section]
\newenvironment{proof}{\noindent {\bf Proof.}}{ \hfill $\Box$\\ }
\def\eps{\varepsilon}
\def\dist{\text{\tiny dist}\,}
\def\sgn{\operatorname{\text{sgn}}}
\def\gcd{\mbox{gcd}}
\def\tmax{t_{\max}}
\def\Dtheta{\Delta\theta}
\title{On Sina\u{\i}\ billiards on flat surfaces with horns}
\author{Henk Bruin
\thanks{Faculty of Mathematics, University of Vienna, 
Oskar Morgensternplatz 1, 1090 Vienna, Austria; {\it henk.bruin@univie.ac.at}}
}
\date{\today}
\begin{document}

\maketitle

\abstract{We show that certain billiard flows on planar billiard tables with horns can be modeled as suspension 
flows over Young towers \cite{Young98} with exponential tails. This implies exponential decay of correlations for the billiard 
map.
Because the height function of the suspension flow itself is polynomial when the horns are Torricelli-like trumpets,
one can derive Limit Laws for the billiard flow, including Stable Limits if the parameter of the Torricelli trumpet is
chosen in $(1,2)$.
}

\subjclass{Primary: 37D50, Secondary: 37A25, 37A60}
\keywords{billiards, suspension flow, limit laws, decay of correlations}

\section{Introduction}

Recent results on the statistical properties of non-uniformly hyperbolic flows in dynamical systems
include polynomial mixing rate, when the flow can be modeled as suspension flow over a Gibbs-Markov map 
or over a Young tower, and the roof function $h$ of the suspension flow has polynomial tails:
\begin{equation}\label{eq:tail}
\mu(\{x \in M : h(x) > t\}) = \ell(t) t^{-\beta},
\end{equation}
where $\ell(t)$ is a slowly varying function.
There are geodesics flows \cite{AD} on non-compact surfaces of curvature $-1$ where
the above tail condition with $\beta = 1$ or $2$ applies (although properties of Kleinian groups
rather than Young towers are used in the modeling).
For Lorentz gas with infinite horizon, the parameter in \eqref{eq:tail} is $\beta = 2$.
So, although the theory puts no restriction on the parameter $\beta$ in \eqref{eq:tail}, these examples provide us only with very specific values of $\beta$.
The model of \cite{CZ05} based on two convex scatterers with points of zero curvature
directly opposite to each other produces finite measure cases with variable $\beta > 1$ for the roof function
of the Young tower (but without giving the slowly varying function in \eqref{eq:tail}).
In dimension one, the Pomeau-Manneville maps allow for inducing schemes where
the induce time satisfies \eqref{eq:tail} for $\beta = 1/\alpha$ where $\alpha$ is the order of contact
between the graph and the tangent at the neutral fixed point.
Thus $\beta > 0$ can be chosen freely, but despite some higher-dimensional variants, Pomeau-Manneville maps
remain too specific to play a substantial role in the modeling of billiards or other mechanical models.
In \cite{Hu00,LM05,HZ16,BT18,EL20} it was shown that almost Anosov diffeomorphisms (and flows \cite{BTT18})
also allow inducing schemes with tails satisfying \eqref{eq:tail}.
These are non-uniformly hyperbolic invertible systems, and, in contrast to Pomeau-Manneville maps,
can be chosen to be $C^\infty$ or real analytic, even if $\beta$ is non-integer.

The purpose of this paper is to provide a class of examples that fit directly in the context of billiard maps, 
and which can be modeled by suspension flows over Young towers with tails as in \eqref{eq:tail}.
The basic ingredient is the geodesic flow on a surface of revolution, which we call {\em horns},
of which the Torricelli trumpet is an example.
New (or at least we are not aware of explicit calculations in the literature)
is a one-parameter family of Torricelli trumpets, which provide tails as in \eqref{eq:tail} where 
the exponent $\beta$ is equal to the parameter of the family.

Let the billiard table $\cQ$ be a flat compact manifold, such as a torus or a rectangle with reflecting boundaries.
We assume that
\begin{itemize}
 \item  there are finitely many circular horns and/or scatterers  $H_i$, $i = 1, \dots, N$, of radius $r_i$
(so of curvature $1/r_i$);
\item their closures are disjoint closures, so the minimal flight time between collisions
$\tau_{\min} > 0$;
\item the horizon is finite, i.e.,  the maximal flight time $\tau_{\max}$ between collisions is finite.
\item Scatterers are ``hard balls'', i.e., the collision rule of the particle with such scatterers is 
the rule of fully elastic reflection. 
\item Horns act as ``soft balls'',  in the sense that they reflect the particle, 
but not according to the law of elastic collision: although the angle of incidence equals the angle of reflection
(up to a minus sign: $\varphi^+ = - \varphi^-$), the entrance position 
on $\partial H_i$ is not necessarily the exit position.
\end{itemize}
A unit mass, unit speed particle moves on this surface with scatterers. It reflects fully elastically at the 
scatterers, but when it meets a horn $H_i$, it moves up on the surface of the horn, 
keeping its speed but observing the law of preservation of angular momentum and the holonomic 
constraint keeping it in $H_i$, until it exits $H_i$ again and resumes its trajectory on $\cQ$.
The excursion time is $2\tmax(\varphi^+)$ where $\varphi^+ \in [-\frac{\pi}{2}, \frac{\pi}{2}]$ 
is the angle of incidence that the particles trajectory makes with the normal vector to $\partial H_i$,
and $\tmax$ is the time for an excursion to reach the highest point in the horn. 
Due to the radial symmetry of $H_i$, the angle of incidence $\varphi^- = - \varphi^+$.

We denote the flow on $\cQ \cup \bigcup_{i=1}^N H_i$ by $\phi^t$.
The excursions of the particle on the horn can take an unbounded amount of time,
so that, despite the bounded distance between, the flow-time between incoming collisions
can be unbounded.

Let us parametrize the circle $\partial H_j$ by the position $\theta^-$ 
(measured clockwise as an angle in $[0,2\pi)$ but in order to avoid confusion 
with the angle of incidence/reflection, we will refer to $\theta^-$ as the position).
The exit position $\theta^+$ is a function of the entrance position $\theta^-$ and the angle of reflection 
$\varphi^+$. The {\em rotation function}\footnote{using the terminology in \cite{BT1}}
$$
\Dtheta := \theta^+ - \theta^-
$$
depends on $\varphi^+$ but (due to radial symmetry) not on $\theta^-$.
We prefer to let $\Dtheta$ depend on the outgoing angle $\varphi^+$,
in order to follow the conditions of B\'alint \& T\'oth \cite{BT1,BT2}, see Section~\ref{sec:condYT}.
Note that $\Dtheta = 0$ at scatterers, and at horns for $\varphi^+=\pm\frac{\pi}{2}$, i.e.,
grazing collisions. Since  $\varphi^+ = - \varphi^-$, the resulting reflection map
\begin{equation}\label{eq:R}
R:M^- \to M^+, 
\qquad (\theta^-,\varphi^-) \mapsto (\theta^+, \varphi^+) = (\theta^- + \Dtheta(-\varphi^-), -\varphi^-)
\end{equation}
represents the outgoing position and angle as function of the incoming position and angle.
Here $M^\pm = \cup_{i=1}^N M^\pm_i$ for
$M^\pm_i = \partial H_i \times [-\frac{\pi}{2}, \frac{\pi}{2}]$
are the incoming and outgoing phase spaces, copies of one another, but formally not the same.

The flight map $F:M^+ \to M^-$ is given by 
\begin{equation}\label{eq:F}
(\theta^+_i,\varphi^+_i) \in M^+_i \mapsto (\theta^-_j,\varphi^-_j) 
= (\theta^-_j, \pi + \varphi^+_j + \theta^-_j - \theta^+_i) \in M^-_j,
\end{equation}
where $i,j$ are the indices of the scatterers or horns of the consecutive collisions.
The composition $T = R \circ F: M^+ \to M^+$ is the billiard map, expressed in outgoing coordinates 
from one horn or scatterer to the next.
Note that $T$ has singularities at $\cup_{i=1}^N \partial H_i \times \{ \pm\frac{\pi}{2} \}$ 
and at $\cup_{\text{\tiny horns } H_i} \partial H_i \times \{ 0 \}$ (non-compactness of horns).

It is worth comparing the collision with horns to scatterers with finite range potentials $V$,
as considered in e.g.\ \cite{Baldwin,D,DL,Knauf1,Knauf2,Kubo}.
Both are modeled by the formula~\eqref{eq:R} where in case of finite range, radially symmetric, potentials $V$ 
and energy level $E$,
$$
\Dtheta = 2\int_{r_{\min}}^{r_i} \frac{ r_i \sin \varphi \ dr}{\sqrt{ r^2(1-2V(r)) - r^2_i \sin^2 \varphi} },
$$
see \cite[Formula (3.7))]{DL} or \cite[Formula (5.2)]{BT1} for the energy level $E = \frac12$ of a unit mass.
An important quantity is the derivative of the rotation function $\Dtheta$:
\begin{equation}\label{eq:kappa0}
\kappa(\varphi) := \frac{d}{d\varphi} \Dtheta(\varphi) \qquad \text{for } \varphi = \varphi^+.
\end{equation}
In \cite{DL} it is shown that the billiard flow is hyperbolic and ergodic if the range of $\kappa$ 
is disjoint\footnote{Since \cite{DL}
measures $\Dtheta$ in incoming angles rather than outgoing as we do, 
their Theorem 4.2 and 4.3 give the forbidden range $[2-\delta,2]$}
from $[-2,-2+\delta]$ for some $\delta > 0$. 
As shown in \cite[Proposition 5.2]{D}, if the scatterer has a smooth finite range potential,
then $\lim_{\varphi \to \pm \frac{\pi}{2}} \kappa(\varphi) = -2$, and indeed, there are several
results showing that the ergodicity of the billiard map can fail, cf.\ \cite{Baldwin,D,Knauf1,Kubo,S63}.
It is not clear, however, that our horns can be modeled as scatterers with finite range potentials.
For instance, sojourn times in
the horn (and hence $\Dtheta$) are not bounded, contrary to what happens in scatterers with
finite range potentials at (all but finitely many) fixed energy levels.

As usual in billiards, $T$ preserves a measure $\mu$ 
that is absolutely continuous to Lebesgue measure,
with density 
$$
d\mu = \frac{r_i}{\Lambda}  \cos \varphi^+ \, d\theta^+ d\varphi^+, \qquad (\theta^+, \varphi^+) \in M^+,
$$
for the normalizing constant $\Lambda = 4\pi\sum_{i=1}^N r_i$.
With respect to this measure, the Sina\u{\i} billiards is known to be ergodic, 
mixing and even Bernoulli, \cite{BS,GO,S}. Our setting is similar enough to conclude
mixing (see \cite[Section 6.7]{CM}), but that doesn't give any quantitative results.

\begin{theorem}\label{thm:YTsusp}
 The billiard flow on billiard tables with horns can be modeled as a suspension flow over a Young tower 
 with exponential tails, see Section~\ref{sec:YT}. The height function $h$ of the suspension
 has polynomial tails $\mu(\{x \in M : h(x) > t\}) \sim C t^{-\beta}$ for some constant $C > 0$  
  if the horns\footnote{The statistical behavior is 
  governed by the trumpet with the smallest parameter.} are Torricelli trumpets 
  with $\beta > 0$ equal to the parameter of the trumpet.
\end{theorem}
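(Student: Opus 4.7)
The plan is to separate the theorem into two largely independent pieces: (i) construct a Young tower for the billiard map $T = R\circ F$ on $M^+$ with exponential tails on its return time; (ii) analyse the roof function $h$ of the resulting suspension flow, which reduces to understanding the horn excursion time $\tmax(\varphi^+)$ as $\varphi^+ \to 0$. Piece (i) supplies the exponential decay of correlations for the billiard map, and piece (ii) supplies the polynomial tail announced for the flow.

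For (i), I would work inside the B\'alint--T\'oth framework of \cite{BT1,BT2}, which is engineered precisely for reflection maps of the form $R(\theta^-,\varphi^-) = (\theta^- + \Dtheta(-\varphi^-), -\varphi^-)$. Their checklist, recalled in Section~\ref{sec:condYT}, demands uniform hyperbolicity (range of $\kappa(\varphi) = \Dtheta'(\varphi)$ disjoint from $[-2,-2+\delta]$), a regular singularity set, one-step expansion, and bounded distortion. The singular set of $T$ decomposes as the standard grazing lines $\partial H_i \times \{\pm\frac{\pi}{2}\}$ plus the new horn-axis lines $\partial H_i \times \{0\}$, on which $\tmax = \infty$. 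Since $\Dtheta$ is unbounded (so the horns cannot be realised as finite-range potential scatterers as in \cite{DL}), I would compute $\Dtheta$ and $\kappa$ directly from geodesic motion on the surface of revolution: Clairaut's relation $r \sin\psi \equiv r_i \sin\varphi^+$ turns both $\Dtheta(\varphi^+)$ and $\tmax(\varphi^+)$ into explicit quadratures in the meridian profile $r(s)$ and the turning radius $r(s^*) = r_i |\sin\varphi^+|$. Standard homogeneity strips handle the grazing lines; near $\{\varphi^+ = 0\}$ I would introduce analogous strips adapted to the power-law blow-up of $\Dtheta$ and $\tmax$, and verify the B\'alint--T\'oth conditions strip by strip.

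For (ii), the roof between two consecutive collisions is $h(x) = \tau(x) + 2\tmax(\varphi^+)$, with $\tau \le \tau_{\max} < \infty$ by the finite-horizon assumption, so the tail of $h$ is entirely governed by $\tmax(\varphi^+)$ near $\varphi^+ = 0$. For the one-parameter Torricelli family I would choose the meridian profile so that the Clairaut quadrature yields $\tmax(\varphi^+) \sim C |\varphi^+|^{-1/\beta}$ as $\varphi^+ \to 0$, thereby identifying the trumpet parameter with $\beta$. Since $d\mu \propto \cos\varphi^+\, d\theta^+\, d\varphi^+$ and $\cos\varphi^+ \to 1$ at the critical angle, the sublevel set $\{h > t\}$ reduces (up to a bounded additive shift coming from the flight time) to $\{|\varphi^+| \lesssim t^{-\beta}\}$, whose $\mu$-measure is asymptotically $C' t^{-\beta}$. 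Several horns contribute additively to this level set, so the smallest $\beta$ dominates, explaining the footnote.

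The main obstacle is the joint behaviour at $\varphi^+ = 0$: one must simultaneously keep $\kappa(\varphi)$ uniformly away from $-2$ (required for the Young tower with exponential tails) and pin down clean power-law asymptotics for $\tmax(\varphi)$ with the prescribed exponent (required for the announced tail). Both quantities arise from the same Clairaut quadrature, so the parametric profile must be tuned so that these two asymptotics are compatible; combined with the ad hoc homogeneity-strip construction at $\{\varphi^+ = 0\}$ needed to apply the B\'alint--T\'oth machinery through the new singularity, this is where the genuine technical work lies.
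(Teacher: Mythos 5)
Your proposal follows essentially the same route as the paper: a Young tower for the billiard map via the B\'alint--T\'oth/Chernov conditions, with additional homogeneity strips accumulating on the new singularity line $\{\varphi^+=0\}$ to control the power-law blow-up of $\Dtheta$ and $\kappa$, combined with an explicit Clairaut quadrature on the surface of revolution giving $\tmax(\varphi_0)\sim I_0|\sin\varphi_0|^{-1/\beta}$ and hence the $t^{-\beta}$ tail of the roof function under $d\mu\propto\cos\varphi^+\,d\theta^+d\varphi^+$. You also correctly identify the two genuine technical points the paper addresses: keeping $\kappa$ uniformly away from $[-2,0]$ (which holds for Torricelli trumpets but fails for pseudo-spheres at grazing collisions) and the distortion/growth analysis inside the strips $\bI_{\pm k}$.
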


It follows that the billiard map has non-zero Lyapunov exponents.
This theorem implies exponential mixing rates for the billiard map, and limit laws of the flow w.r.t.\
the Liouville measure. We state these results later on (Theorem~\ref{thm:map} and \ref{thm:flow}) as they 
can be taken from the literature.
Although soft-ball billiards will be one of our main tools,
the flow on Torricelli trumpets 
is a flow on a negatively curved surface (in fact, the curvature tends to zero in the cusp).
There is extensive literature on such flows, e.g.\ \cite{Cou,CS,DP,PW} and references therein to give a sample,
but these are predominantly concerned with ergodicity, mixing and Lyapunov exponents.
The application to limit laws, and the exact computations of our types of horns (despite similarities to 
\cite[Section 2]{PW}) seem to be new.

The next section is concerned with building a Young tower for the billiard map \cite{Young98}, 
or rather verifying that the methods of Chernov \cite{Cher} and the soft-ball billiard approach 
of B\'alint \& T\'oth \cite{BT1,BT2} applies under appropriate conditions.
In Section~\ref{sec:flowhorn} we then estimate the sojourn times on horn, that lead to the height of the 
suspension flow of Theorem~\ref{thm:YTsusp}.
\\[3mm]
\noindent {\bf Notation:}
We will write $(\theta,\varphi) \in M$ for the  position and angle at outgoing collisions, 
and use $(\theta^+,\varphi^+) \in M^+$
only if we want to emphasize that it is about the outgoing collision.
We write $a_n \sim b_n$ if $\lim_n a_n/b_n=1$ and $a_n \approx b_n$ if $a_n/b_n$ 
have a bounded and positive $\limsup$ and $\liminf$.
\\[3mm]
{\bf Acknowledgments:}
HB gratefully acknowledges the support of FWF grant P31950-N45 and Stiftung A\"OU Project 103\"ou6.
He also wants to thank P\'eter B\'alint for his explanations of his and Chernov's papers \cite{BT1,BT2,Cher},
Ian Melbourne for his input on the literature on limit laws,
and Homero Canales for verifying some of the lengthier computations in this paper.
Thanks also to the referees for their vigilance and many helpful remarks.

\section{Billiard maps for tables with horns}

\subsection{Conditions to build a Young tower}\label{sec:condYT}

Young \cite{Young98} introduced a tower construction and used it (among other things) 
to prove that the billiard map of the Sina\u{\i} has exponential decay of correlations.
Chernov \cite{Cher} formulated general conditions under which Young tower 
with exponential or with polynomial tails for various other billiards besides the Sina\u{\i}\ billiard.
One is the uniform hyperbolicity of the billiard map, so (despite its discontinuities) with uniform
expansion and contraction rates, and the angles between stable and unstable leaves
uniformly bounded away from zero.
We discuss this for our setting 
in Section~\ref{sec:dist}.
Additionally distortion has to be controlled, also in order to find a differentiable quotient map (after 
dividing out the stable direction). Global distortion control is
impossible due to grazing collisions (i.e., collisions with $\varphi = \pm\frac{\pi}{2}$) at scatterers.
Homogeneity strips are therefore introduced in the phase space near $\varphi= \pm\frac{\pi}{2}$
within which distortion control is feasible. This leads, however, to the chopping of unstable leaves
and the need for a ``growth of unstable manifold'' condition in \cite[Section 2]{Cher}. We discuss this for our setting 
in Section~\ref{sec:YT}.

In their turn, B\'alint \& T\'oth give in Definitions 2 and 3 of \cite{BT1} sufficient conditions 
in the soft-ball scatterer setting to apply the methods of Chernov. We rely on
 \cite{BT1} for the verification of \cite[Formulas (26)-(26)]{Cher}.
We summarize these conditions, using their notation, specifically
the derivative of the rotation function $\Dtheta$:
$$
\kappa(\varphi) := \frac{d}{d\varphi} \Dtheta(\varphi) \qquad \text{for } \varphi = \varphi^+.
$$
\begin{enumerate}
 \item $\inf_{\varphi} |2+\kappa(\varphi)| > 0$.
 \item $\tau_{\min} > \sup_{\varphi} -2r_i \frac{\kappa(\varphi)}{\omega(\varphi)}$ where  $r_i$
 is the radius of the scatterers and
 $\omega(\varphi) := \frac{2+\kappa(\varphi)}{\cos \varphi}$.
 \item $\Dtheta$ is piecewise H\"older, i.e., there is $C > 0$ and $\alpha \in (0,1)$ such that
 $$
 |\Dtheta(\varphi) - \Dtheta(\varphi')| \leq C |\varphi-\varphi'|^\alpha
 $$
 for all second coordinates $\varphi, \varphi'$ of points in the same element of a finite partition of the phase space $M^+$.
 \item $\Dtheta$ is piecewise $C^2$ on the interiors of the partition in the previous item.
 \item There is $C > 0$ such that $|\frac{d\kappa(\varphi)}{d\varphi}| \leq C|2+\kappa(\varphi)|^3$.
 \item There is $\eps > 0$ such that $\omega(\varphi)$ from item 2.\ is monotone on one-sided neighborhoods
 $[\varphi^*-\eps,\varphi^*)$ and $(\varphi^*, \varphi^*+\eps]$ of angles $\varphi^*$ 
 where $\kappa$ is infinite or discontinuous.
\end{enumerate}

The importance of these conditions is underlined by the fact that hyperbolicity and een ergodicity can fail
even if $\kappa = -2$, see \cite{TR} where finite range potentials and 
configurations near grazing collisions are established that lead to
homoclinic orbits with nearby elliptic islands.

We list some comments on these properties for Torricelli trumpets, 
and mention where in this paper they are addressed further.

\begin{enumerate}
\item[ad 1.] This condition holds because $\kappa(\varphi) < -2$. 
\item[ad 2.] This condition holds because $\omega(\varphi) < 0$ and 
$\sup_{\varphi} -2r_i \frac{\kappa(\varphi)}{\omega(\varphi)} = 0$.
However, if the horns have the shape of pseudo-spheres, which give exponential tails in 
Theorem~\ref{thm:YTsusp}, $\kappa(\pm \frac{\pi}{2}) = -2$ at grazing collisions
and $\sup_{\varphi} -2r_i \frac{\kappa(\varphi)}{\omega(\varphi)} = \infty$.
Hence hyperbolicity and even ergodicity when the horns are pseudo-spheres remain unproven.

\item[ad 3.] H\"older continuity of the rotation function $\Dtheta(\varphi)$ holds in our case for
 $\varphi \approx \pm \frac{\pi}{2}$, 
 but fails near head-on collisions with horns (i.e., $\varphi \approx 0$).
 In fact, if $\varphi = 0$, the particle will never leave the horn again. 
 As we will see, $\Dtheta(\varphi)$ and hence of $\kappa$ are the unbounded.
This situation is not covered in \cite{BT2}; it requires extra arguments 
(in the shape of adding more ``homogeneity strips'') to control the distortion.
More precisely, we will introduce an equivalent of homogeneity strips, denoted by $\bI_{\pm k}$,
which accumulate from both directions on the equator $\{\varphi = 0\}$, within which we 
can control the  distortion of
$\Dtheta(\varphi)$ on these $\bI_k$, see Proposition~\ref{prop:dist}.
Fortunately, unstable leaves become automatically long, in a way that 
the need for additional growth lemmas is avoided.
 
 \item[ad 4.] The rotation function $\Dtheta(\varphi)$ is $C^2$ on all the intervals of continuity
 in $[-\frac{\pi}{2}, \frac{\pi}{2}]$.
 Because $\Dtheta(\varphi)$ blows up near $\{\varphi = 0\}$, we have to resort to $C^2$ smoothness
 on the (artificial) homogeneity strips $\bI_{\pm k}$ near $\{ \varphi = 0\}$. 
 
 \item[ad 5.] This is unproblematic for Torricelli trumpets
 (see Sections~\ref{sec:torri}).
 
 \item[ad 6.] This is unproblematic; the computations in Section~\ref{sec:torri}
 yield this condition automatically.
\end{enumerate}

\subsection{Distortion control of the billiard map.}\label{sec:dist}

In this section, we study the distortion of the billiard map $T$. Since the flight map $F$
has bounded distortion inside homogeneity strips (as in \cite[Section 5.3]{CM}, but see Formula~\eqref{eq:distT} below)
and the reflection at scatterers goes as for standard billiard maps, we concentrate on the reflection map $R$
for the horns.
Here the distortion control when $\cos \varphi \approx 0$ is not an issue, as it can be dealt with in the standard way
of introducing homogeneity strips
$$
\bH_{\pm k} = \{ x \in M^+ : |\pm \frac{\pi}{2}-\varphi^+| \in [(k+1)^{-2}, k^{-2}) \},
$$
see \cite[Section 5.3]{CM}, and the fact that $\kappa(\varphi^+)$ is bounded near $\varphi^+ = \pm \frac{\pi}{2}$.
The additional problem occurs for $\varphi^+ \approx 0$
because $\Dtheta$ and hence $\kappa$ are unbounded here.
Our solution is as with grazing collisions at scatterers, introduce homogeneity strips
within which distortion is controlled. Fortunately, the large expansion in such strips overcomes the artificial
chopping in one iterate. Hence the analysis of this case is easier, and doesn't require 
additional growth lemmas.
We  start this section by describing these homogeneity strips, and then deal with the distortion control,
with Proposition~\ref{prop:dist} as main result.

For each horn $H_j$ select an other horn or scatterer $H_i$, opposite to it. 
\begin{itemize}
 \item[(i)]
If $H_i$ is a horn, then there is a maximal arc $A_j \subset M^+_j$ such that for each $x \in A_j$, 
the trajectory starting at $x$ collides with $H_i$ head-on, i.e., $F(x) = (\theta^-_i,0)$,
and such a trajectory will not exit $H_i$ anymore.
\item[(ii)]
If $H_i$ is a scatterer, then $A_j$ 
is the maximal arc $A_j \subset M^+_j$ such that $F(x) \in M^-_i$ and $F \circ T(x) = (\theta^-_j,0) \in M^-_j$.
In other words: trajectories of the flow starting at $x \in A_j$ first bounce with $H_i$ 
before head-on colliding with $H_j$, and thus not exiting $H_j$ again, see Figure~\ref{fig:A}.
\end{itemize}

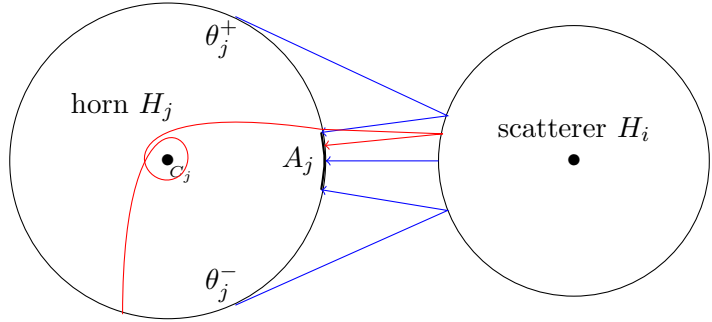
\begin{figure}[ht]
\begin{center}
\begin{tikzpicture}[scale=0.6]
\draw (4,5) circle(3.5); \node at (3,6.2) {\small horn $H_j$};   
   \node at (4,5) {\small $\bullet$}; \node at (4.3,4.7) {\tiny $C_j$};
      \draw (13,5) circle(3); \node at (13,5) {\small $\bullet$}; \node at (13,5.7) {\small scatterer $H_i$};
\draw[->, draw=blue] (10,5) -- (7.5,5);  \node at (6.9,5) {\small $A_j$};    
\draw[->, draw=blue] (5.5,8.2) -- (10.2,6) -- (7.4,5.63);  \node at (5.2,7.7) {\small $\theta^+_j$};
\draw[->, draw=blue] (5.5,1.8) -- (10.2,3.9) -- (7.4,4.36); \node at (5.2,2.3) {\small $\theta^-_j$};
\draw[-, thick] (7.4,5.63) .. controls (7.5,5)  .. (7.4,4.36);
\draw[-, draw=red] (3,1.6) .. controls (3, 7) and (5,5.5)  .. (4.3,4.7);
\draw[-, draw=red] (4.3,4.7) .. controls (3.5,4) and (2,6.5)  .. (7.4,5.7);
\draw[->, draw=red] (7.4,5.7) -- (10.1,5.6) -- (7.47,5.34); 
\end{tikzpicture}
\caption{Trajectories asymptotic to the center $C_j$ of $H_j$.}\label{fig:A}
\end{center}
\end{figure}

In either case, $A_j$ is a smooth curve in $M^+_j$
that stretches across $M^+_j$ in the vertical direction and is transversal to $\{ \varphi^+_j = 0\}$.
Let $b_j = (\theta^+_j, +\frac{\pi}{2})$ and $b'_j = (\theta^+_j, -\frac{\pi}{2})$ be the endpoints of $A_j$.
The reflection map 
$$
R:M^-_j \setminus \{ \varphi^-_j = 0\} \to M^+_j \setminus \{ \varphi^+_j = 0\},
\qquad (\theta^-_j, \varphi^-_j) \mapsto (\theta^- + \Dtheta(-\varphi^-_j), - \varphi^-_j),
$$ 
is a bijection.
The closer to the equator $\S^1 \times \{ 0 \} \subset M^-_j$, the stronger the shear of $R$.
That is, an arc $\{\theta\} \times (0, \frac{\pi}{2}]$ is mapped by $R$ to a spiral curve wrapping infinitely 
often around the annulus $M^+_j$, compactifying on $\S^1 \times \{ 0 \}$ from below,
while $\{\theta\} \times [-\frac{\pi}{2}, 0)$ is mapped by $R$ to a spiral curve wrapping infinitely 
often around the annulus $M^+_j$, compactifying on $\S^1 \times \{ 0 \}$ from above.
Conversely, $\Psi_j := R^{-1}(A_j \setminus \{\varphi^+_j = 0\})$ consists of two spirals
wrapping infinitely often around $M^-_j$ and compactifying on the equator, one from above and one from below.
For every point $(\theta^-_j, \varphi^-_j) \in \Psi_j$, 
\begin{itemize}
 \item[(i)] if $H_i$ is a horn, then $F \circ R(\theta^-_j,\varphi^-_j)$ represents a  particle head-on colliding with $H_i$, so that $T \circ R(\theta^-_j,\varphi^-_j)$ is not defined: the particle never leaves $H_ji$.
 \item[(ii)]  if $H_i$ is a scatterer, $T \circ R(\theta^-_j,\varphi^-_j)$ represents a 
particle outgoing from $H_i$ and head-on colliding with $H_j$, so that $R \circ T \circ R(\theta^-_j,\varphi^-_j)$ 
is not defined: the particle never leaves $H_j$ again.
\end{itemize}
Now $M^-_j \setminus \overline{\Psi_j}$ consists of two strips that wrap around $M^-_j$ infinitely often
and approaching $\{ \varphi = 0\}$ in a spiral fashion from above and below.
Let $e_j = R^{-1}(b_j)$,  $e'_j = R^{-1}(b'_j)$, and let $E_j$ be the straight line
connecting $e_j$ and $e'_j$ in $M^-_j$.
Then $E_j$ cuts $M^-_j \setminus \overline{\Psi_j}$ into infinitely many strips $\bI_{\pm k}$, $k \in \N$,
whose closures are curvilinear rectangles except that they coincide at two opposite corners; note that they wrap
around $M^-_j$ once.

\begin{figure}[ht]
\begin{center}
\begin{tikzpicture}[scale=0.6]
\draw[-] (0,0) -- (6,0) -- (6,4) -- (0,4) -- (0,0);
\draw[.] (0,2) -- (6,2); \node at (-1.2,2) {\small $\varphi = 0$};
\node at (-0.6,0) {\small $-\frac{\pi}{2}$}; \node at (-0.6,4) {\small $+\frac{\pi}{2}$};
\node at (2.5,0) {\small $\bullet$}; \node at (2.4,-0.5) {\small $e_j$};
\node at (3.5,4) {\small $\bullet$}; \node at (3.4,4.6) {\small $e'_j$};
\draw[-, draw=red] (3.5,4) -- (2.5,0); \node at (3.2,1) {\small $E_j$};
\draw[->] (7,2) -- (9,2); \node at (8,2.4) {\small $R$};
\draw[-] (10,0) -- (16,0) -- (16,4) -- (10,4) -- (10,0);
\draw[.] (10,2) -- (16,2); \node at (17,2) {\small $\varphi = 0$};
\node at (16.7,0) {\small $-\frac{\pi}{2}$}; \node at (16.7,4) {\small $+\frac{\pi}{2}$};
\draw[-, draw=blue] (3.5,4) .. controls (4.7, 3.6)  .. (6,3.5);
\draw[-, draw=blue] (0,3.5) .. controls (3, 3.3)  .. (6,3.2);
\draw[-, draw=blue] (0,3.2) -- (6,3.0);
\draw[-, draw=blue] (0,3.0) -- (6,2.73);
\draw[-, draw=blue] (0,2.73) -- (6,2.55);
\draw[-, draw=blue] (0,2.55) -- (6,2.38);
\draw[-, draw=blue] (0,2.38) -- (6,2.2);
\draw[-, draw=blue] (0,2.2) -- (6,2.13);
  \draw[-, draw=blue] (2.5,0) .. controls (1.3, 0.4)  .. (0,0.5);
  \draw[-, draw=blue] (6,0.5) .. controls (3, 0.7)  .. (0,0.8);
  \draw[-, draw=blue] (6,0.8) -- (0,1.0);
\draw[-, draw=blue] (6,1.0) -- (0,1.27);
\draw[-, draw=blue] (6,1.27) -- (0,1.45);
\draw[-, draw=blue] (6,1.45) -- (0,1.62);
\draw[-, draw=blue] (6,1.62) -- (0,1.8);
\draw[-, draw=blue] (6,1.8) -- (0,1.87);
\draw[-, draw=red] (12.5,4) .. controls (11.3, 3.6)  .. (10,3.5);
\draw[-, draw=red] (16,3.5) .. controls (13, 3.3)  .. (10,3.2);
\draw[-, draw=red] (16,3.2) -- (10,3.0);
\draw[-, draw=red] (16,3.0) -- (10,2.73);
\draw[-, draw=red] (16,2.73) -- (10,2.55);
\draw[-, draw=red] (16,2.55) -- (10,2.38);
\draw[-, draw=red] (16,2.38) -- (10,2.2);
\draw[-, draw=red] (16,2.2) -- (10,2.13);
  \draw[-, draw=red] (13.5,0) .. controls (14.7, 0.4)  .. (16,0.5);
  \draw[-, draw=red] (10,0.5) .. controls (13, 0.7)  .. (16,0.8);
  \draw[-, draw=red] (10,0.8) -- (16,1.0);
\draw[-, draw=red] (10,1.0) -- (16,1.27);
\draw[-, draw=red] (10,1.27) -- (16,1.45);
\draw[-, draw=red] (10,1.45) -- (16,1.62);
\draw[-, draw=red] (10,1.62) -- (16,1.8);
\draw[-, draw=red] (10,1.8) -- (16,1.87);
\draw[-, draw=blue] (12.5,4) .. controls (12.7,2) .. (13.5,0); \node at (13.7,1) {\small $A_j$};
\node at (13.5,0) {\small $\bullet$}; \node at (13.4,-0.54) {\small $b'_j$};
\node at (12.5,4) {\small $\bullet$}; \node at (12.4,4.62) {\small $b_j$};
\end{tikzpicture}
\caption{The curves $E_j, \Psi_j = R^{-1}(A_j) \subset M^-_j$ and $A_j, R(E_j) \subset M^+_j$.}\label{fig:Psi}
\end{center}
\end{figure}
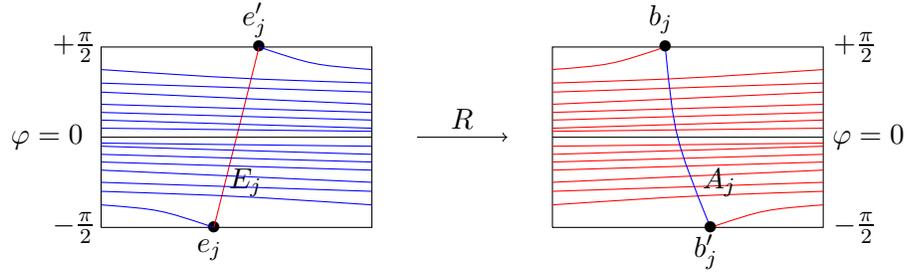

The sets $\bI_{\pm k}$ play the role of homogeneity strips, within which unstable derivatives are uniformly bounded.

\begin{lemma}\label{lem:tildeI}
 Let $\tilde I_{\pm k}$ be the arcs $\bI_{\pm k} \cap (\{ 0 \} \times [-\frac{\pi}{2}, \frac{\pi}{2}])$.
 Then $|\tilde I_{\pm k}| \approx k^{-(1+\frac{1}{\beta})}  = o(d(\tilde I_k, 0))$ as $k \to \infty$
 for some $\beta > 0$, where $d$ is the Euclidean distance on
 $[-\frac{\pi}{2}, \frac{\pi}{2}]$.
\end{lemma}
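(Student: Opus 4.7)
Plan. First I would unpack the defining equation of $\Psi_j$. By \eqref{eq:R}, a point $(\theta^-,\varphi^-) \in M^-_j$ lies in $\Psi_j = R^{-1}(A_j \setminus \{\varphi^+ = 0\})$ if and only if
$$\theta^- + \Dtheta(-\varphi^-) \equiv \theta_{A_j}(-\varphi^-) \pmod{2\pi},$$
where $\theta_{A_j}(\cdot)$ is the (smooth, bounded) $\theta^+$-coordinate along the curve $A_j$. Restricting to the meridian $\{\theta^-=0\} \subset M^-_j$, the two boundaries of each arc $\tilde I_{\pm k}$ are two consecutive solutions $\varphi = \varphi_k$ of
$$\Dtheta(-\varphi_k) = \theta_{A_j}(-\varphi_k) + 2\pi k, \qquad k \in \Z.$$
Thus $|\tilde I_{\pm k}| = |\varphi_{k+1} - \varphi_k|$ and $d(\tilde I_{\pm k}, 0) \approx |\varphi_k|$ for $|k|$ large, and the entire lemma reduces to controlling these two quantities.

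Next I would invoke the geometric computations carried out in Section~\ref{sec:flowhorn} (specialised to Torricelli trumpets in Section~\ref{sec:torri}), which yield constants $C, \beta > 0$, with $\beta$ determined by the trumpet parameter, such that as $\varphi \to 0$,
$$|\Dtheta(\varphi)| \approx |\varphi|^{-\beta} \qquad \text{and} \qquad |\kappa(\varphi)| \approx |\varphi|^{-\beta-1}.$$
Since $\theta_{A_j}$ is bounded, it can be absorbed into the constants, so inverting the first asymptotic gives $|\varphi_k| \approx |k|^{-1/\beta}$ as $|k| \to \infty$. The mean value theorem applied to $\varphi \mapsto \Dtheta(-\varphi) - \theta_{A_j}(-\varphi)$, whose derivative is dominated by $|\kappa(-\varphi_k)|$ (since $\theta_{A_j}'$ is bounded), then yields
$$|\tilde I_{\pm k}| = |\varphi_{k+1} - \varphi_k| \approx \frac{2\pi}{|\kappa(-\varphi_k)|} \approx |\varphi_k|^{\beta + 1} \approx |k|^{-(1+1/\beta)},$$
which is the first claim. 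The second claim is immediate:
$$\frac{|\tilde I_{\pm k}|}{d(\tilde I_{\pm k}, 0)} \approx \frac{|k|^{-(1+1/\beta)}}{|k|^{-1/\beta}} = |k|^{-1} \to 0.$$

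The only genuine obstacle is the power-law asymptotic for $\Dtheta$ itself, which is not elementary and requires the Clairaut-type geodesic computation on the surface of revolution carried out in Section~\ref{sec:torri}; once that is in hand, the lemma is a short calculus argument. A small additional check I would perform is that $A_j$ is transverse to $\{\varphi^+ = 0\}$ and sufficiently close to vertical (as described before Figure~\ref{fig:A}) to guarantee that each strip $\bI_{\pm k}$ meets the meridian $\{\theta^-=0\}$ in exactly one connected arc for $|k|$ large, so that the counting by $k$ is unambiguous; this is automatic from the construction of $A_j$.
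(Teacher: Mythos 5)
Your proof is correct and follows essentially the same route as the paper: both identify the arcs $\tilde I_{\pm k}$ as the $\varphi$-ranges over which $\Dtheta$ advances by $2\pi$, use the leading-order asymptotic $\Dtheta(\varphi)\approx C\varphi^{-\beta}$ near $\varphi=0$, and invert to get $d(\tilde I_{\pm k},0)\approx k^{-1/\beta}$ and $|\tilde I_{\pm k}|\approx k^{-(1+1/\beta)}$. Your extra bookkeeping with the bounded curve $\theta_{A_j}$ and the mean value theorem is a slightly more careful version of the paper's direct computation of $g^{-1}(2\pi k,2\pi(k+1))$, but it is not a different argument.
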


\begin{proof}
 The precise computation depends on the shape of the horn $H_j$, but there is always a leading term
 of the map $\Dtheta: [-\frac{\pi}{2}, \frac{\pi}{2}] \to  \R$ of the form $g:\varphi \mapsto C \varphi^{-\beta}$
 for some $0 \neq C \in \R$ and $\beta > 0$.
 Now 
 $$
\tilde I_{k} \sim g^{-1}( 2\pi k, 2\pi (k+1)) = 
\left( \left(\frac{C}{2\pi (k+1)}\right)^{1/\beta} \ , \ \left(\frac{C}{2\pi k}\right)^{1/\beta} \right),
 $$
 so $|\tilde I_k| \sim \frac{1}{\beta k} \left(\frac{C}{2\pi k}\right)^{1/\beta}  \sim \frac{1}{\beta k} d(\tilde I_k, 0)$. 
 The same argument works for $-k$.
\end{proof}

\begin{prop}\label{prop:tranversal}
Assume that $\kappa(\varphi^+) \leq 0$ on $M^+$.
Then the stable and unstable leaves $W^{u/s}$ of the billiard map $T$ are uniformly transversal to each other.
\end{prop}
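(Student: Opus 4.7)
The plan is to prove uniform transversality by exhibiting a pair of continuous invariant cone fields $\{C^u(x)\}, \{C^s(x)\} \subset T M^+$ that are pointwise disjoint with a uniform angle between them. Since the tangent lines $T_x W^u$ and $T_x W^s$ are obtained as nested intersections of iterated cones, uniform transversality of $W^u$ and $W^s$ follows automatically from uniform angular separation of $C^u$ and $C^s$. Following the Chernov--Markarian formalism adapted to soft-balls by B\'alint--T\'oth, I would define $C^u(x)$ (resp.\ $C^s(x)$) as the set of tangent vectors $(d\theta, d\varphi) \in T_x M^+$ whose corresponding wavefront in configuration space is diverging, i.e.\ has positive (resp.\ negative) signed curvature $\mathcal{B}$. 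By construction $C^u(x) \cap C^s(x) = \{0\}$, so the task reduces to showing invariance and quantifying the separation.

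Next I would derive the transformation rules of $\mathcal{B}$ under $T = R \circ F$. For the free-flight $F$ of length $\tau$, the standard Sina\u{\i}-type formula
\begin{equation*}
\mathcal{B}^- = \frac{\mathcal{B}^+}{1 + \tau\mathcal{B}^+}
\end{equation*}
preserves the sign of $\mathcal{B}$ away from the focal direction and is already well-understood from the hard-ball theory. For the soft reflection $R$, using
\begin{equation*}
DR = \begin{pmatrix} 1 & -\kappa(\varphi^+) \\ 0 & -1 \end{pmatrix}
\end{equation*}
together with the geometric definition of $\mathcal{B}$ (which involves the $\cos \varphi^+$ factor and the scatterer radius $r_i$), a direct computation yields a reflection rule of the form $\mathcal{B}^+ = \mathcal{B}^- + \Xi(\kappa(\varphi^+), \varphi^+, r_i)$. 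The correction $\Xi$ should specialize to the classical $2/(r_i \cos\varphi^+)$ at $\kappa \equiv 0$ (hard scatterers) and, crucially, should be nonnegative throughout the region $\kappa(\varphi^+) \leq 0$. The hypothesis of the proposition is therefore tailored exactly so that the $R$-step preserves $\{\mathcal{B} > 0\}$ and $\{\mathcal{B} < 0\}$.

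Combining the two transformation rules yields forward-invariance of $C^u$ under $DT$ and backward-invariance of $C^s$ under $DT^{-1}$. After one iterate the image $DT \, C^u(x)$ sits strictly inside $C^u(Tx)$, so the asymptotic unstable direction $E^u(x)$ lies in the interior of $C^u(x)$ with a uniform positive lower bound on $\mathcal{B}$; the same for $E^s$ with a uniform negative upper bound. Translating back to slopes $d\varphi/d\theta$ via the continuous (and bounded) relation encoded in the definition of $\mathcal{B}$, this gives a uniform positive angle between $E^u(x)$ and $E^s(x)$, hence between $W^u$ and $W^s$.

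The main obstacle is the algebraic derivation of the soft reflection formula for $\mathcal{B}$, since unlike the hard reflection, $DR$ shears the $d\theta$-coordinate by $-\kappa \, d\varphi$, and the resulting expression for $\Xi(\kappa,\varphi^+,r_i)$ must be computed with care so that its sign under $\kappa \leq 0$ is unambiguous. A secondary subtlety is that $\kappa$ can be unbounded near head-on collisions $\varphi^+ \approx 0$ with horns, so one must check that the sign conclusion persists in this limit (the homogeneity strips $\mathbb{I}_{\pm k}$ constructed in the previous section are already in place to handle such behavior). Once the reflection formula is established with the correct sign, everything else follows from the standard cone-invariance scheme.
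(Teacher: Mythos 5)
Your overall strategy (invariant cone fields with uniform angular separation) is the right one, but two of your key claims do not survive contact with the actual geometry, and they are exactly the points where the hypothesis $\kappa\le 0$ and the order of composition $T=R\circ F$ do the real work. First, you defer the entire content to an uncomputed correction term $\Xi$ and assert it is nonnegative ``throughout the region $\kappa(\varphi^+)\le 0$''. In the soft-billiard formalism the curvature increment at reflection is governed by $\omega(\varphi)=(2+\kappa(\varphi))/\cos\varphi$ (item 2 of Section~2.1), which changes sign at $\kappa=-2$, not at $\kappa=0$; for the Torricelli horns one has $\kappa<-2$, hence $\omega<0$, so your sign guess fails precisely in the regime this paper cares about. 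The paper sidesteps this by working in $(\theta^+,\varphi^+)$ coordinates with the quadrant cones $\cC^{u/s}=\{d\theta^+\cdot d\varphi^+\gtrless 0\}$ --- which are \emph{not} the cones $\{\mathcal B\gtrless 0\}$ --- and observing that $\kappa\le 0$ forces every entry of $DT=\left(\begin{smallmatrix}1&\kappa\\0&-1\end{smallmatrix}\right)DF$ to be negative, so both quadrant cones are preserved with no wavefront computation needed.

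Second, your claim that ``after one iterate the image $DT\,C^u(x)$ sits strictly inside $C^u(Tx)$'' with uniform bounds on both families is false here: $\kappa\to-\infty$ at head-on collisions with horns, and the image unstable cone degenerates onto the horizontal axis, so there is no uniform interior containment on the unstable side. The paper's proof rescues uniform transversality \emph{asymmetrically}: only the stable preimage cones $DT^{-1}(\cC^s)$ are uniformly compactly contained in $\cC^s$, because in backward time $DF^{-1}$ acts \emph{after} the unbounded shear $DR^{-1}$; a uniformly interior $E^s$ together with $E^u\subset\cC^u$ already yields a uniform angle. Treating the unboundedness of $\kappa$ as a ``secondary subtlety'' conceals the step where the naive two-sided cone argument actually breaks; any $\mathcal B$-coordinate version must replicate this asymmetry explicitly.
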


\begin{proof}
In standard hard ball billiards, written in $\varphi_i^+$ and arc-length coordinate $s_i$
parametrizing $\partial H_i$, with associated coordinates $d\varphi^+_i$ and $ds_i$
in the associate tangent space $\mathcal TM_i$, the cones fields
$$
\tilde \cC^u_{(s_i,\varphi^+_i)} := \{ ds_i \cdot d\varphi^+_i \geq 0\} \ \text{ and } \ 
\tilde \cC^s_{(s_i,\varphi^+_i)} := \{ ds_i \cdot d\varphi^+_i \leq 0\} 
$$
serve as unstable and stable cone fields.
Indeed, the derivative of the corresponding billiard map from scatterer $i$ to scatterer $j$ is
$D\tilde T = \begin{pmatrix} 1 & 0 \\ 0 & -1 \end{pmatrix} \cdot D\tilde F$,
where the derivative of the flight map
$$
D\tilde F = \frac{-1}{\cos \varphi^+_j}
\begin{pmatrix}
 \frac{\tau}{r_i} + \cos \varphi^+_i & \tau \\[1mm]
 -\frac{\tau}{r_ir_j} - \frac{\cos \varphi^+_j}{r_i} - \frac{\cos \varphi^+_i}{r_j} \quad & 
 -\frac{\tau}{r_j} - \cos \varphi^+_j
\end{pmatrix},
$$
in coordinates $(s_i,\varphi^+_i)$ and $(s_j,\varphi^+_j)$ can be easily reconstructed from
see \cite[Formula (2.26)]{CM}. Here $\tau$ is the flight time between $H_i$ and $H_j$,
and we wrote $1/r_i$ and $1/r_j$ for their curvatures.
Since all the entries of $D\tilde T$ are negative, the above cones are indeed preserved under forward
and negative times respectively. The difference with our setting is:
\begin{itemize}
 \item We use $\theta_i^+ = r_i s_i$ and $\theta_i^+ = r_i s_i$ as coordinates. The necessary change of coordinates 
 requires multiplying $D\tilde F$ on the left and right with the matrices
 $\begin{pmatrix} 1 & 0 \\ 0 & r_j \end{pmatrix}$ and
$\begin{pmatrix} 1 & 0 \\ 0 & 1/r_i \end{pmatrix}$, respectively. This gives
\begin{equation}\label{eq:DF}
 DF 
 = -\frac{1}{\cos \varphi^+_j} \begin{pmatrix} \frac{\tau}{r_i} + \cos \varphi^+_i  \quad
& \frac{\tau}{r_i}  \\[1mm]
-\frac{\tau}{r_i} - \frac{r_j \cos \varphi^+_j}{r_i} - \cos \varphi^+_i
& -\frac{\tau}{r_i} - \frac{r_j \cos \varphi^+_j}{r_i} \end{pmatrix}.
\end{equation}

\item The extra shear of the reflection map $R:M^-_j \to M^+_j$, 
$(\theta^-_j,\varphi^-_j)\mapsto(\theta^- + \Dtheta, -\varphi^-_j)$, with derivative
 $\begin{pmatrix} 1 & \kappa \\ 0 & -1 \end{pmatrix}$ with $\kappa = \frac{d\Dtheta(\varphi^+)}{d\varphi^+}$, 
 instead of the derivative
  $\begin{pmatrix} 1 & 0 \\ 0 & -1 \end{pmatrix}$ of the reflection map of a standard scatterer.
\end{itemize}
The resulting derivative is
\begin{eqnarray*}
DT &=& \begin{pmatrix} 1 & \kappa \\ 0 & -1 \end{pmatrix} \cdot DF \\
&=& \frac{-1}{\cos \varphi^+_j} 
\begin{pmatrix} \frac{\tau}{r_i} + \cos \varphi^+_i - \frac{\kappa \tau}{r_i}
- \frac{\kappa r_j \cos \varphi^+_j}{r_i} - \kappa \cos \varphi^+_i \quad
& \frac{\tau}{r_i} - \frac{\kappa \tau r}{r_i} - \frac{\kappa r_j \cos \varphi_j^+}{r_i} \\[1mm]
\frac{\tau}{r_i} + \frac{r_j \cos \varphi_j}{r_i} + \cos \varphi^+_i
& \frac{\tau}{r_i} + \frac{r_j \cos \varphi^+_j}{r_i} \end{pmatrix}.
\end{eqnarray*}
Since $\kappa \leq 0$, again all entries of $DT$ are negative, so
\begin{equation}\label{eq:cones}
 \cC^u_{(\theta^+,\varphi^+)} := \{ d\theta^+ \cdot d\varphi^+ \geq 0\} \ \text{ and } \ 
 \cC^s_{(\theta^+,\varphi^+)} := \{ d\theta^+ \cdot d\varphi^+ \leq 0\} 
\end{equation}
are preserved under forward and backward iteration of $DT$, respectively.
If $\kappa$ is very negative, then $DT_{(\theta^+, \varphi^+)}(\cC^u_{(\theta^+,\varphi^+)})$ 
aligns itself with the horizontal axis.
However, the images of the stable cones $DT^{-1}(C^s_{T(\theta^+,\varphi^+)})$ are uniformly 
compactly contained in $C^s_{(\theta^+,\varphi^+)}$ because the matrix $DF^{-1}$ is applied {\bf after}\footnote{If 
we had used a billiard map $T:M^- \to M^-$ in terms of incoming coordinates, it would have been the unstable
image cones rather than the stable image cones, that would have been uniformly compactly contained
in the unstable cone fields.}
(the large shear of) $DR^{-1}$. Hence the angle between  stable and unstable leaves is uniformly bounded 
away from zero.
This proves the lemma.
\end{proof}

The backward singularity sets $\cS^{m} := \cup_{j=0}^m T^{-j}(\bigcup_i \partial H_i \times 
\{-\frac{\pi}{2},\frac{\pi}{2} \}\, \cup\, \bigcup_{\text{\tiny \em horns }H_j} \partial H_j \times \{ 0 \})$ for  
$m \geq 1$, also belong to the stable cone field, and the
forward singularity sets 
$\cS^{-m} := \cup_{j=0}^m T^{j}((\bigcup_i \partial H_i \times \{-\frac{\pi}{2},\frac{\pi}{2} \} \cup \bigcup_{\text{\tiny \em horns }H_j}
\partial H_j \times \{ 0 \})$ for $m \geq 1$, 
belong to the unstable cone field.

\begin{prop}\label{prop:dist}
 Let $W = W^u$ be an unstable leaf contained in $\bI_{\pm k}$ and bounded away from $\{ \varphi = \pm \frac{\pi}{2}\}$.
 Then there is $C_{\dist} \in \R$ such that the distortion in the unstable direction
 \begin{equation}\label{eq:distR}
 \log \frac{ |J^uR(y)| }{ |J^u R(x)| } \leq C_{\dist} d_{R(W)}(R(x), R(y)),
 \end{equation}
 where $d_{R(W)}$ indicates arc-length in $R(W)$.
 In fact, $\log \frac{ |J^uR(y)| }{ |J^uR(x)| } = o( d_{R(W)}(R(x), R(y)))$ as $k \to\infty$, i.e., as $\varphi \to 0$.
\end{prop}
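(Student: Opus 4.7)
The plan is to derive an explicit formula for $J^u R$ from the shear form of $DR$, and then to bound its logarithmic variation along $R(W)$ using condition 5 of Section~\ref{sec:condYT} and the thickness estimate from Lemma~\ref{lem:tildeI}. First I would parameterize $W\subset\bI_{\pm k}$ by $\theta^-$, writing $m^-=d\varphi^-/d\theta^-$, $y=-\varphi^-=\varphi^+(R(x))$ and $\kappa=\kappa(y)$. From $R(\theta^-,\varphi^-)=(\theta^-+\Dtheta(-\varphi^-),-\varphi^-)$ a direct arc-length computation gives
\[
J^u R(x)^2 \;=\; \frac{(1-\kappa m^-)^2+(m^-)^2}{1+(m^-)^2}.
\]
The invariance identity $DR\cdot E^u(x)=E^u(R(x))$ yields $m^-=m^+/(\kappa m^+-1)$, where $m^+$ is the slope of $R(W)$ in $M^+$; by Proposition~\ref{prop:tranversal} $|m^+|$ is uniformly bounded and bounded away from $0$, so on $\bI_{\pm k}$ we have $m^-\sim 1/\kappa$ and $|J^u R|\sim 1/|\kappa|$.

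Next I would differentiate $\log J^u R$ along $R(W)$, parameterized by arc length $s$, and use the invariance relation to substitute for $(m^+)'$. At leading order in $1/|\kappa|$ this produces a cancellation between the apparently divergent shear term $-(\kappa'/\kappa)(dy/ds)$ and the contribution of $(m^+)'$. For a Torricelli trumpet with $\Dtheta(y)\sim Cy^{-\beta}$, condition 5 holds in the sharper form $|\kappa'|/|\kappa|^3\to 0$ as $y\to 0$, so the residual after cancellation is of order $|\kappa'|/\kappa^2\sim y^{\beta}\sim 1/k$. Together with the thickness bound $|\bI_{\pm k}|\sim k^{-1-1/\beta}$ from Lemma~\ref{lem:tildeI}, which controls the variation of $y$ on any sub-arc of $R(W)\cap\bI_{\pm k}$, integration yields \eqref{eq:distR} with $C_{\dist}=O(1/k)$, giving both the distortion bound and the refined $o(d_{R(W)})$ statement as $k\to\infty$.

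The main obstacle is carrying the cancellation through rigorously without assuming $C^1$-smoothness of the unstable foliation, which is only H\"older in general. I expect the remedy to be algebraic: since $m^-=m^+/(\kappa m^+-1)$ is an exact identity, one can eliminate $(m^+)'$ in favor of $\kappa,\kappa',m^+,(m^-)'$ and verify that the resulting distortion formula depends on $m^+$ only through bounded quantities. The Torricelli-specific computations of Section~\ref{sec:torri}, together with the assumption that $W$ stays bounded away from $\{\varphi=\pm\frac{\pi}{2}\}$, should then supply the uniform control needed to close the estimate.
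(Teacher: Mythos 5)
There is a genuine gap, and it lies at the very first geometric step: you have the orientation of the unstable direction across $R$ backwards. From the invariance $DR\cdot E^u(x)=E^u(R(x))$ you solve for $m^-$ in terms of $m^+$ and then invoke Proposition~\ref{prop:tranversal} to claim that the slope $m^+$ of $R(W)$ is bounded away from $0$, concluding $m^-\sim 1/\kappa$ and $|J^uR|\sim 1/|\kappa|$. But transversality of stable and unstable cones does not give a lower bound on $|m^+|$; on the contrary, the discussion after \eqref{eq:cones} states explicitly that for $\kappa$ very negative the image unstable cone \emph{aligns with the horizontal axis}, i.e.\ $m^+\to 0$. The correct input is on the other side: the unstable leaf $W\subset\bI_{\pm k}\subset M^-_j$ is the $DF$-image of an unstable direction at the previous collision and is uniformly transversal to the lines $\{\varphi=\mathrm{const}\}$, so it is a graph $\theta=w(\varphi)$ with $|w'|$ bounded, while $R(W)$ is the long, nearly horizontal spiral. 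Consequently $|J^uR|\approx|\kappa|$ (strong \emph{expansion}), not $1/|\kappa|$; this expansion is exactly what the paper needs to absorb the chopping into the strips $\bI_{\pm k}$ without a growth lemma, so the sign of the exponent here is not a cosmetic issue.

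The error is fatal to your argument as structured: with $|J^uR|\sim1/|\kappa|$ the image arc $R(W)$ would be \emph{short}, the variation of $\log|J^uR|$ over it would be of order $\eps/\varphi$ relative to an image arc-length of order $\eps/|\kappa|$, and the distortion constant would blow up like $1/\varphi$ unless the "cancellation" you postulate between $-(\kappa'/\kappa)(dy/ds)$ and $(m^+)'$ actually occurs — which you do not establish, and which rests on the false premise. With the correct orientation no cancellation is needed and the proof is short: parametrizing $W$ by $\varphi$, one has $J^uR=\sqrt{1+(w'+\kappa)^2}/\sqrt{1+(w')^2}\approx|\kappa(\varphi)|\approx C\beta\varphi^{-(1+\beta)}$, so $\log\frac{|J^uR(y)|}{|J^uR(x)|}\approx(1+\beta)\eps/\varphi$, while $d_{R(W)}(R(x),R(y))=\int_\varphi^{\varphi+\eps}|\kappa|\,dv\approx\beta C\eps\,\varphi^{-(1+\beta)}$; the ratio is $O(\varphi^\beta)$, which is bounded and tends to $0$ as $k\to\infty$ by Lemma~\ref{lem:tildeI}. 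You should rework the proposal starting from the correct identification of which of $W$ and $R(W)$ is steep.
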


\begin{proof}
 First we assume that $W$ is transversal to horizontal lines in $M$, i.e., transversal to lines of constant
 $\varphi$.
 Thus $W$ can be written as the graph of a $C^1$-function $w:\tilde I_{\pm k} \to \S^1$.
 The length-element of arc-length along $W$ is
 $ds = ds(\varphi) = \sqrt{1+(w'(\varphi)^2)} d\varphi$.
 Recall that $R(\theta,\varphi) = (\theta+\Dtheta(\varphi), -\varphi)$ is the reflection map, and
 $\Dtheta'(\varphi) = \kappa(\varphi)$.
 Then the image of $ds$ under $R$ is
 $$
 dR(s) = \sqrt{ d\varphi^2 + (w'(\varphi)+\kappa(\varphi))^2\ d\varphi^2}
 = \sqrt{1+ (w'(\varphi)+\kappa(\varphi))^2}\ d\varphi.
 $$
 Transversality of $W$ means that $|w'|$ is bounded, so
 $$
 \frac{dR(\varphi)}{d\varphi} = \sqrt{1+ (w'(\varphi)+\kappa(\varphi))^2} \approx \kappa(\varphi) \approx 
 C\beta \varphi^{-(\beta+1)}.
 $$
 where we used that leading term of $\kappa$ is $C\beta \varphi^{-(\beta+1)}$
 as in Lemma~\ref{lem:tildeI}.
 Hence, taking $x = (w(\varphi), \varphi)$ and $y = (w(\varphi+\eps), \varphi+\eps)$, the distortion
 $$
 \log \frac{|J^uR(y)|}{|J^uR(x)|} \approx -(\beta+1) \log(1+\frac{\eps}{\varphi} )
 \approx \frac{-\eps(1+\beta)}{\varphi}.
 $$
 Now to find $d_{R(W)}(R(y),R(x))$ we integrate $dR(s)$ over $[\varphi, \varphi+\eps]$.
 This gives
 \begin{eqnarray*}
 \int_{\varphi}^{\varphi+\eps}  \sqrt{1+ (w'(v)+\kappa(v))^2}\ dv
 &\approx& \int_{\varphi}^{\varphi+\eps} |\kappa(v)|\ dv\\
 &=& |\Dtheta(\varphi+\eps) - \Dtheta(\varphi)| 
 \approx C\left( \varphi^{-\beta} - (\varphi + \eps)^{-\beta} \right) \\
 &=& C \varphi^{-\beta} \left( 1-(1+\frac{\eps}{\varphi})^{-\beta} \right)
 \sim \beta C \eps \varphi^{-(1+\beta)},
\end{eqnarray*}
 where in the last approximation we used that $|\eps|$ is small compared to $|\varphi|$
 as shown in Lemma~\ref{lem:tildeI}. 
Formula \eqref{eq:distR} follows.

For $\varphi \to 0$, we get $1+\sup|w'(\varphi)|^2 = o(\kappa(\varphi))$, so that the $\approx$ 
becomes $\sim$ in this case, and the extra factor $\varphi^{-\beta}$ accounts for the little $o$. 
\end{proof}

The flight map $F$ has similar distortion properties as \eqref{eq:distR}, as can be derived from
the distortion result in \cite[Lemma 5.27]{CM}, except that the exponent on the right-hand side 
is $1/(1+1/\beta)$, where $1+1/\beta$ is the exponent in the width in the strips $|\bI_{\pm k}|
\approx k^{1+1/\beta}$, 
see \cite[Formulas (5.8), (5.21) and Lemma 5.27]{CM} and the adaptation for general exponents in e.g.\ 
\cite[Lemma 3.1]{DZ14}.
Because of Lemma~\ref{lem:tildeI} for $\bI_{\pm k}$ and the usual width of the strips $\bH_{\pm k}$, 
this exponent becomes $\min\{\frac13, \frac{\beta}{\beta+1}\}$.

Because the uniform expansion of the billiard map $T = R \circ F$, we get the following distortion
estimate for the billiard map:
 \begin{equation}\label{eq:distT}
 \log \frac{ |J^uT^n(y)| }{ |J^uT^n(x)| } \leq C_{\dist} 
 d_{T^n(W)}(T^n(x), T^n(y))^{\min\{\frac13, \frac{\beta}{\beta+1}\}},
 \end{equation}
for a potentially large uniform constant $C_{\dist}$ and all $n \geq 1$ and $x,y$ in the same
unstable leaf of $T^n$.
The absolute continuity of holonomies in \cite[Formula (2.3)]{Cher} is a corollary of \eqref{eq:distT},
see \cite{BSC,Young99}.
Together with \cite[Theorem 5.2 and Section 5.7]{CM} this give sufficient distortion control to 
conclude that the quotient tower map $T_{\bar \Delta}$ presented in the next section,
has a H\"older derivative.

\subsection{Building a Young tower with exponential tails}\label{sec:YT} 

A Young tower  \cite{Young98} is a schematic dynamical system, in fact an extension over
a dynamical system $(X, T)$, of the form
$(\Delta, T_{\Delta}, \mu_{\Delta})$, where the space
$$
\Delta = \bigsqcup_i \bigsqcup_{\ell=0}^{\sigma_i-1} \Delta_{i,\ell}.
$$
The sets $\Delta_{i,\ell}$ are copies of the $\Delta_{i,0}$ and the tower map $T_{\Delta}$ acts as
$$
T_{\Delta}: x \in \Delta_{i,\ell} \mapsto \begin{cases}
                                           x \in \Delta_{i,\ell+1} & \text{ if } 0 \leq \ell < \sigma_i-1;\\
                                           T^{\sigma(x)}(x) \in \Delta_0 := \sqcup_i \Delta_{i,0} & \text{ if } \ell = \sigma_i-1,
                                          \end{cases}
$$
and $(\Delta, T_{\Delta})$ factors over $(X,T)$ via $\pi:\Delta \to X$, $\pi(u,\ell) = T^{\ell}(u)$
for $(u,\ell)\in \Delta_{i,\ell}))$.
The return map $T^{\sigma_i}: \Delta_0 \to \Delta_0$ to the base
$\Delta_0 = \sqcup \Delta_{i,0}$ is a uniformly hyperbolic map with 
certain distortion properties, preserving an SRB-measure $\mu_0$.
Here $\sigma:\Delta_0 \to \N$ with $\sigma_i := \sigma|_{\Delta_{i,0}}$ constant for all $i$ 
is called the roof function.
We speak of exponential tails if there is $\lambda \in (0,1)$ such that 
$\mu_0(\{ x : \sigma(x) > n\}) = O(\lambda^n)$.
We can extend $\mu_0$ to an $T_{\Delta}$-invariant measure by setting
$\mu_{\Delta}|_{\Delta_{i,\ell}}  = \bar\sigma^{-1} \mu_0|_{\Delta_{i,0}}$ for normalizing constant
$\bar\sigma = \sum_{n \geq 1} n\mu_0(\{ \sigma(x) = n\})$.
This measure $\mu_{\Delta}$ pushes down to a $T$-invariant SRB-measure on $(X,T)$ via $\mu = \mu_{\Delta} \circ \pi^{-1}$.
The existence of a Young tower with exponential tails implies that the underlying system
$(X,T,\mu)$ is exponentially mixing (provided $\gcd(\sigma_i : i \in \N\} = 1$) and satisfies the Central Limit Theorem 
for H\"older observables, see \cite{Young98}. 
A step in the argument is to consider the quotient tower $(\bar \Delta, T_{\bar \Delta})$ obtained
by collapsing stable leaves to points. The smoothness (Gibbs-Markov) of the quotient map $T^{\sigma_i}_{\bar \Delta}$,
as described in e.g.\ \cite[Section 3.1]{Young98} and \cite[Theorem 5.2 and Section 5.7]{CM} 
relies on the distortion estimates given in Section~\ref{sec:dist}, specifically Formula~\eqref{eq:distT}.

Chernov \cite[Theorem 2.1]{Cher} proved a general theorem on the existence of a Young tower 
with exponential tails for non-uniformly hyperbolic invertible maps, based on a set of conditions
concerning expansion and distortion control along unstable leaves
and specific ``growth of unstable manifolds'' conditions (2.6)-(2.8) in \cite{Cher}.
He continues to verify these conditions for various billiard systems, of which 
the standard Sina\u{\i}\ billiard maps\footnote{i.e., disjoint strictly convex fully elastic scatterers with $C^3$ boundaries
on a compact flat table, first fully treated in \cite{Young98}} is the most relevant to us, 
see \cite[Sections 6 \& 7]{Cher}.
B\'alint \& T\'oth verify these conditions for soft scatterers, expressed as Definition 2 \& 3 in \cite{BT2}.
In the previous sections we verified most of the Chernov resp.\ B\'alint \& T\'oth conditions, and here
we combine these steps to the final verification. That is, we indicate which adaptations in the arguments 
of \cite[Section 7]{Cher} are still required.

Chernov \cite[Section 7]{Cher} uses two metrics to obtain hyperbolic expansion:
\begin{itemize}
\item The $p$-(pseudo-)metric which has the best expansion properties, but only that  after
a close-to-grazing collision with corresponding cut into homogeneity strips, the expansion has a one iterate delay.

\item The Euclidean metric. Now the expansion factor in unstable directions occurs instantaneously at collisions,
but it is not always $\geq 1$.
Therefore a particular iterate $T^m$ of the billiard map $T$ is chosen, which multiplies the number of
discontinuity curves $\cS^{m-1}$ and
$\cup_{n=0}^{m-1} T^{-n}(\cup_j \partial H_j \times \{ 0\})$,
and corresponding boundaries of homogeneity strips
$\cup_{k \geq k_0} \cup_{n=0}^{m-1} T^{-n}(\partial \bI_{\pm k})$.
\end{itemize}
However, combining the two metrics, one can prove uniform expansion (contraction) of
unstable (stable) leaves, see \cite[Lemma 7.1]{Cher}.

Let $W$ be any unstable leave of length $\leq \delta_0$. It may be cut into at most $K_m+1$ pieces
by $\cS^{m-1}$, where $K_m$ depends only on $m$ and the number of scatterers and horns.
In the next $m$ iterate, it may be cut again, even into countably many pieces, by curves in
$\cup_{n=0}^{m-1} T^{-n}(\{ \varphi = 0 \text{ at horns}\} \cup \bigcup_{k \geq k_0} \partial \bI_{\pm k}) \cup
\{ \varphi = \pm \frac{\pi}{2}\} \cup \bigcup_{k \geq k_0} \partial \bH_{\pm k})$.
We label these pieces as
$W_{k_1, \dots, k_m, j}$, where $1 \leq j \leq K_m+1$, $k_i \in \Z$ 
and $T^{m-n}(W_{k_1, \dots, k_m, j}) \subset \bI_{k_n}$. 
Bear in mind that some of these labels can refer to the empty set.
Head-on collisions with horns and grazing collisions have their own homogeneity strips 
$\bI_{\pm k}$ and $\bH_{\pm k}$ where the expansion of the billiard
map is $\approx k^{1+\beta}$ and $\approx k^2$ respectively;
we will use $\nu := \min\{ 2, 1+\beta\}$ for the worst case of the two.
The unstable expansion for $T_1 = T^m$ on a piece $W_{k_1, \dots, k_m}$ of unstable manifold thus becomes
$$
|J^u_1(x)| \geq L_{k_1, \dots, k_m} := \max\{ \Lambda_1, \frac{1}{C_{\text{\tiny exp}\,}}\prod_{k_i \neq 0} k_i^{\nu} \}.
$$
This product $\prod_{k_i \neq 0}$ then reappears in the definition\footnote{This $\Theta$ is called $\theta_0$ in \cite{Cher}.}
of $\Theta := 2\sum_{k \geq k_0} k^{-\nu} \leq \frac{7}{\nu}\, k_0^{1-\nu}$.
We need to choose $k_0$ so large that, as in \cite[Formula (7.5)]{Cher} with corresponding constant $B_0$,
\begin{equation}\label{eq:theta0}
(K_m+1) (\Lambda_1^{-1} + 2B_0 \Theta) < 1.
\end{equation}
Also \cite[Lemma 7.2]{Cher} needs to be adjusted to:

\begin{lemma}\label{lem:7.2}
 For all $\delta > 0$, there is $B = B(m)$ such that
 $$
 \sum_{k_1, \dots, k_m \geq 2} \min\{ \delta, (\kappa_1 \cdots k_m)^{-\nu} \} < B(m) \delta^{\frac{\nu-1}{2m}}.
 $$
\end{lemma}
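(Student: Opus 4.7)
The plan is to proceed by induction on $m$, using the elementary identity
$$
\min\{\delta,\ k^{-\nu}X\} = k^{-\nu}\min\{k^\nu\delta,\ X\}
$$
to rewrite $S_{m+1}(\delta):=\sum_{k_1,\dots,k_{m+1}\geq 2}\min\{\delta,(k_1\cdots k_{m+1})^{-\nu}\}$ as $\sum_{k\geq 2} k^{-\nu}\, S_m(k^\nu\delta)$, thereby reducing the $(m+1)$-fold problem to a one-dimensional integration against the $m$-fold problem. One may assume $\delta\in(0,1]$; for $\delta>1$ the trivial bound $S_m(\delta)\leq(\zeta(\nu)-1)^m$ is absorbed into $B(m)$.

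For the \textbf{base case} $m=1$, I would split $S_1(\delta)=\sum_{k\geq 2}\min\{\delta,k^{-\nu}\}$ at the balance point $k_0:=\lceil\delta^{-1/\nu}\rceil$: the low-$k$ part is bounded by $\delta\cdot k_0$ and the high-$k$ tail by $\int_{k_0}^\infty s^{-\nu}\,ds$, both of order $\delta^{(\nu-1)/\nu}$. Since $\nu\in(1,2]$ and $\delta\in(0,1]$, this is at most $C\,\delta^{(\nu-1)/2}$, finishing the base case.

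For the \textbf{inductive step}, split the outer sum in $\sum_{k\geq 2}k^{-\nu}S_m(k^\nu\delta)$ at $k=\delta^{-1/\nu}$. For $k\leq\delta^{-1/\nu}$, where $k^\nu\delta\leq 1$, the inductive hypothesis produces a contribution
$$
B(m)\,\delta^{(\nu-1)/(2m)}\sum_{2\leq k\leq\delta^{-1/\nu}}k^{\,\nu(\nu-1-2m)/(2m)}.
$$
The exponent on $k$ equals $-1$ in the degenerate case $(m,\nu)=(1,2)$ and is strictly less than $-1$ otherwise, because $\nu\leq 2\leq 2m$. In the strict case the $k$-series is bounded, giving $O(\delta^{(\nu-1)/(2m)})$; in the borderline case it contributes only a factor $\log\delta^{-1}$. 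For $k>\delta^{-1/\nu}$ the uniform bound $S_m(k^\nu\delta)\leq(\zeta(\nu)-1)^m$ yields a tail of order $\delta^{(\nu-1)/\nu}$. Each of these three types of term fits under $B(m+1)\,\delta^{(\nu-1)/(2(m+1))}$ for $\delta\in(0,1]$, using that $(\nu-1)/(2(m+1))$ is strictly smaller than both $(\nu-1)/(2m)$ and $(\nu-1)/\nu$ (the latter because $2(m+1)>\nu$), and that any power of $\log\delta^{-1}$ is dominated by $\delta^{-\eps}$ for arbitrarily small $\eps>0$.

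The \textbf{main subtlety} is the borderline logarithm that appears in the step from $m=1$ to $m=2$ when $\nu=2$, where the relevant $k$-series is exactly of the form $\sum k^{-1}$. This is not a genuine obstacle, however: the gap between the sharp exponent $(\nu-1)/\nu$ one would naively obtain and the claimed, weaker exponent $(\nu-1)/(2m)$ is wide enough at every stage of the induction to absorb any finite power of $\log\delta^{-1}$ into the constant $B(m)$, while still leaving a strict power-of-$\delta$ improvement on passing from $m$ to $m+1$.
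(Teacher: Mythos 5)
Your proof is correct and is essentially the argument the paper intends: the paper itself gives no details but defers to the appendix of Chernov's paper, whose Lemma 7.2 is the case $\nu=2$ and is proved by precisely this induction on $m$, splitting the outer sum at $k\approx\delta^{-1/\nu}$ and using the uniform bound $(\zeta(\nu)-1)^m$ on the tail. Your treatment of the borderline case $(m,\nu)=(1,2)$, where the inner $k$-series produces a logarithm that is absorbed because the target exponent $\frac{\nu-1}{2(m+1)}$ is strictly smaller than both $\frac{\nu-1}{2m}$ and $\frac{\nu-1}{\nu}$, supplies exactly the ``minor and obvious adaptations'' to general $\nu\in(1,2]$ that the paper alludes to.
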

But the proof goes as in \cite[Appendix]{Cher}, with some minor and obvious adaptations.

Thus we can apply Chernov's main theorem for the billiard map, which we restate here:

\begin{theorem}\label{thm:map}
 For any type of horn discussed in this paper, for every $\alpha \in (0,1)$
 there is $\lambda \in (0,1)$
 such that the billiard map $(M,T)$ has exponential decay of correlations:
 $$
 \left| \int_M v \cdot w \circ T^n \, d\mu - \int_M v \, d\mu \int_M w \, d\mu \right| =  O(\lambda^n)
 $$
 for the SRB-measure $\mu$ and $\alpha$-H\"older functions $v,w:M \to \R$ and also the Central Limit Theorem holds
 for $v$ provided it is not cohomologous to a constant function.
\end{theorem}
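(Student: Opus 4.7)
The plan is to verify the hypotheses of Chernov's main theorem \cite[Theorem 2.1]{Cher} for the billiard map $T$ and then invoke it to produce a Young tower with exponential tails; the two conclusions of the theorem (exponential decay of correlations and the CLT for H\"older observables) then follow directly from \cite[Theorems 3 and 4]{Young98} applied to that tower, together with the standard approximation argument for functions supported on unions of homogeneity strips. The aperiodicity needed for the sharp $O(\lambda^n)$ mixing rate holds because the return time function $\sigma$ takes values in an unbounded set of integers with $\gcd=1$, as is the case for standard Sina\u{\i}\ billiards.

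First I would collect the ingredients already assembled in the preceding subsections: the uniform transversality of stable and unstable cones from Proposition~\ref{prop:tranversal}; the distortion control along unstable leaves from Proposition~\ref{prop:dist} and its iterated form~\eqref{eq:distT}, from which absolute continuity of the stable holonomy (condition (2.3) of~\cite{Cher}) follows in the standard way via \cite{BSC,Young99}; the countable Markov-type partition provided by the homogeneity strips $\bH_{\pm k}$ near grazing and $\bI_{\pm k}$ near head-on collisions with horns, with widths controlled by Lemma~\ref{lem:tildeI}; and the uniform expansion estimate $|J^u_1(x)| \geq L_{k_1,\dots,k_m}$ on pieces $W_{k_1,\dots,k_m,j}$ under $T_1 = T^m$, with the worst-case exponent $\nu = \min\{2, 1+\beta\}$ accounting for both types of strips.

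Next I would fix $m$ and the threshold $k_0$ so that~\eqref{eq:theta0} is satisfied, which is precisely what is needed to verify Chernov's ``growth of unstable manifolds'' conditions \cite[(2.6)--(2.8)]{Cher}: a short unstable curve is cut by $T_1$ into at most $K_m+1$ smooth pieces and then into countably many subpieces by the boundaries of the $\bI_{\pm k}$ and $\bH_{\pm k}$, but the sum of the reciprocal expansions over these subpieces is bounded by $(K_m+1)(\Lambda_1^{-1} + 2B_0 \Theta)<1$, so in aggregate the unstable curves grow until they reach a definite size. The combinatorial estimate required here is exactly Lemma~\ref{lem:7.2}, which replaces \cite[Lemma~7.2]{Cher} and whose proof is the one in \cite[Appendix]{Cher} with $\nu$ in place of $2$. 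The main obstacle I anticipate is making sure that the countable chopping by the $\bI_{\pm k}$ does not require a separate growth lemma: this is where the estimate $|J^u R| \approx k^{1+\beta}$ inside $\bI_{\pm k}$, obtained in the course of proving Proposition~\ref{prop:dist}, is decisive, because the one-step expansion inside a strip already compensates for the artificial cut at its boundary, so no auxiliary iterations are needed.

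With Chernov's conditions verified, his theorem yields a hyperbolic product set $\Lambda_0 \subset M^+$ with a return map $T^\sigma$ for which $\mu_0(\sigma > n) = O(\lambda^n)$, and the Young tower built over $\Lambda_0$ has exponential tails. The quotient tower $(\bar \Delta, T_{\bar\Delta})$ obtained by collapsing stable leaves is Gibbs--Markov, with H\"older Jacobian guaranteed by~\eqref{eq:distT}. Applying \cite[Theorems~3 and~4]{Young98} then produces the asserted exponential decay of correlations and the Central Limit Theorem for H\"older observables that are not cohomologous to a constant, completing the proof of Theorem~\ref{thm:map}.
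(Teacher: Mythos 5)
Your proposal follows essentially the same route as the paper: Section~\ref{sec:YT} assembles exactly these ingredients (Proposition~\ref{prop:tranversal}, the distortion estimate~\eqref{eq:distT}, the strips $\bH_{\pm k}$ and $\bI_{\pm k}$ with the exponent $\nu=\min\{2,1+\beta\}$, condition~\eqref{eq:theta0}, and Lemma~\ref{lem:7.2}) and then invokes Chernov's main theorem, with the conclusion imported from \cite{Young98}. Your observation that the large one-step expansion inside $\bI_{\pm k}$ obviates a separate growth lemma for the head-on singularity is precisely the point the paper makes as well.
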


\section{The billiard flow}

The billiard flow can now be modeled as a suspension flow over this Young tower, i.e.,
the space is now $\Delta^h := \sqcup_{i,\ell} \Delta_{i,\ell} \times [0,h(x)] / \sim$
where $(x, h(x)) \sim (T_{\Delta}(x),0)$, and the flow
$\phi^t_{\Delta}(x,u) = (x,u+t) \in \Delta^h$.
The height function $h$ is either equal to the (bounded) flight time $\tau(x)$ between a horn/scatterer
and a scatterer, 
or equal to the flight time $\tau(x)$ between a scatterer and a horn plus the sojourn time $2\tmax$
in the horn.
The $\phi^t_{\Delta}$-invariant measure $\mu^h_{\Delta} = \bar{h}^{-1} \mu_\Delta \otimes \mbox{Leb}$
for the normalizing constant $\bar h = \int_\Delta h(x) \, d\mu_\Delta$ or $\bar h = 1$ if
this integral is infinite, because in this infinite measure case, there is no normalization.
The corresponding flow-invariant measure $\mu^h$ is the push-down $\mu_\Delta^h \circ \pi_h^{-1}$
where $\pi_h(x,u) = \phi^u \circ \pi(x)$.

The computations in Section~\ref{sec:torri} show that the tails of $h$ have the asymptotics  
\begin{equation}\label{eq:tail2}
\mu(\{ x \in M^+_i : h \circ F(x) > t\}) = \frac{r_i}{\Lambda}
\int_{\{ x \in M^+_i :  \tau(x) + 2\tmax \circ F(x) > t\}} \cos \varphi \, d\mu \sim Ct^{-\beta}
\end{equation}
for some constant depending only on the shape of the horns. 
In fact, the exponent $\beta$ is equal to the parameter $\beta$ of the Torricelli trumpet,
and therefore $\mu^h$ is finite if and only if $\beta > 1$.

Due to Theorems~\ref{thm:YTsusp} and \ref{thm:map} with \eqref{eq:tail2} 
we can apply results from \cite{MV19} or \cite{BTT18} to derive the following distributional limit theorems for the flow.

\begin{theorem}\label{thm:flow}
Consider a Sina\u{\i} billiard with Torricelli trumpets 
as horns, where $\beta > 1$ is the smallest parameter of these trumpets.
Let $v$ be a measurable bounded\footnote{Note that the space is not compact, so H\"older continuity doesn't imply 
boundedness.}
H\"older observable such that $\inf_{x \in H_i}|v(x)| > 0$ for at least one horn $H_i$ with parameter $\beta$. Then:
\begin{itemize}
\item If $\beta \in (1,2)$, then $v$ satisfies a Stable Law:
 $$
 \frac{1}{T^{1/\beta}} \left(\int_0^T v \circ \phi^t \, dt - T \int v \, d\mu^h \right) 
 \Rightarrow^d G_\beta
 \qquad \text{ as }\ T \to \infty.
 $$ 
\item  If $\beta = 2$, then $v$ satisfies a non-Gaussian Central Limit Theorem:
 $$
 \frac{1}{\sqrt{T \log T}} \left(\int_0^T v \circ \phi^t \, dt - T \int v \, d\mu^h \right) 
 \Rightarrow^d {\mathcal N}(0,1)
 \qquad \text{ as }\ T \to \infty.
 $$
\item If either $\beta > 1$ and $\mbox{supp}(v)$ is compact (rather than containing a horn),
or $\beta > 2$, then $v$ satisfies a standard Central Limit Theorem:
 provided $v$ is not cohomologous to a constant function, there is a constant $\sigma > 0$ such that
 $$
 \frac{1}{\sigma \sqrt{T}} \left(\int_0^T v \circ \phi^t \, dt - T \int v \, d\mu^h \right) \Rightarrow^d {\mathcal N}(0,1)
 \qquad \text{ as }\ T \to \infty.
 $$  
\end{itemize}
\end{theorem}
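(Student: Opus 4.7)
The plan is to reduce the statement to known distributional limit theorems for suspension flows over Young towers with polynomial roof tails, by checking the hypotheses of \cite{MV19} (for $\beta \in (1,2)$ and $\beta = 2$) and the classical theorem of Melbourne--T\"or\"ok, as rederived in \cite{BTT18}, for $\beta > 2$. By Theorem~\ref{thm:YTsusp} the billiard flow is conjugate to a suspension flow $(\Delta^h,\phi^t_\Delta,\mu^h_\Delta)$ over a Young tower with exponential return tails, and by Theorem~\ref{thm:map} the base map has exponential decay of correlations on H\"older observables, so the Gibbs--Markov quotient tower is strongly mixing. The key input from Section~\ref{sec:torri} is the tail estimate \eqref{eq:tail2}, $\mu(h>t)\sim C t^{-\beta}$, which puts $h$ in the normal domain of attraction of a one-sided $\beta$-stable law (respectively has finite variance when $\beta>2$, and slowly varying truncated variance $\sim C'\log t$ when $\beta=2$).

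For the first two items I would lift $v$ to $\tilde v(x,u) := v(\pi_h(x,u))$ on $\Delta^h$ and invoke the limit theorems of \cite{MV19}: for a suspension flow over a Young tower (or Gibbs--Markov base) with roof $h$ satisfying \eqref{eq:tail2}, any H\"older observable $\tilde v$ which integrates against $\mu^h$ and such that the induced observable $V(x):=\int_0^{h(x)} \tilde v(x,u)\, du$ lies in the same domain of attraction as $h$, obeys the stated Stable Law (when $\beta\in(1,2)$) or the non-standard CLT with normalizer $\sqrt{T\log T}$ (when $\beta=2$). The boundedness and H\"older regularity of $v$ give $\tilde v \in L^\infty$ and H\"older on each tower level. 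The standing hypothesis $\inf_{x\in H_i}|v(x)|>0$ for a horn of parameter $\beta$ ensures that along an excursion of duration $\sim t$ the contribution $\int_{\text{excursion}} v\circ \phi^s\,ds \asymp t$, so $V$ inherits the tail $\mu(V>t)\sim C_v t^{-\beta}$ with the \emph{same} exponent, which is exactly the condition needed to be in the relevant domain of attraction; moreover the sign of $V$ is eventually fixed on large-excursion events, so the stable limit is one-sided but centered by $T\int v\, d\mu^h$ as stated.

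For the third item, when $\beta>2$ the roof function satisfies $\int h^2\, d\mu_\Delta<\infty$, so by the standard CLT for Young towers with exponential tails and square-integrable roofs (e.g.\ \cite{MV19} or \cite{BTT18}) the flow satisfies the usual $\sqrt{T}$ CLT, with positive asymptotic variance provided $v$ is not a coboundary plus constant. When $\mbox{supp}(v)$ is compact and disjoint from all horns, the excursion contribution $V$ is uniformly bounded, so $V\in L^\infty$ regardless of $\beta$, and the same standard CLT applies. Non-degeneracy of $\sigma$ reduces, via the Livsic-type argument for Young towers, to the cohomology condition stated in the theorem.

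The principal technical point is the second one: showing that $V$ lies in the same domain of attraction as $h$, with the appropriate sign and tail constant. This requires a careful comparison of $\int_0^{2\tmax(\varphi)} v\circ\phi^s\, ds$ to $2\tmax(\varphi)$ on the set $\{\varphi\approx 0\}$ governing the tail, which uses that $v$ is bounded and has a fixed sign lower bound on the horn, together with the monotonicity of $\tmax$ in $\varphi$ computed in Section~\ref{sec:torri}. Once this comparison is in place, the three conclusions are direct applications of the theorems of \cite{MV19} and \cite{BTT18}.
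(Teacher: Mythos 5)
Your proposal is correct and takes essentially the same route as the paper, whose entire proof of this theorem is the single sentence preceding it: given the Young-tower suspension structure (Theorem~\ref{thm:YTsusp}), exponential mixing of the base map (Theorem~\ref{thm:map}) and the tail asymptotics \eqref{eq:tail2}, the three limit laws are quoted directly from \cite{MV19} and \cite{BTT18}. Your sketch simply makes explicit the hypothesis-checking the paper leaves implicit, in particular that the induced observable $V(x)=\int_0^{h(x)}\tilde v\,du$ inherits the tail exponent $\beta$ of $h$ from the condition $\inf_{x\in H_i}|v(x)|>0$.
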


\subsection{Dynamics of the flow on horns}\label{sec:flowhorn}

Let $H$ be a surface of revolution in $\R^3$ obtained by revolving the curve $x = x(z)$ around the $z$-axis.
We will use the radius $r = r(z) = \sqrt{x^2+y^2}$ as radius of $H$ and $z = z(r)$ is the inverse function.
Thus $H$ has the parametrization
\begin{equation}\label{eq:para}
\sigma(z,\theta) = (r(z) \cos \theta, r(z) \sin \theta, z), \qquad z \geq z_0, \theta \in [0,2\pi).
\end{equation}
Abbreviate $r_0 := r(z_0)$.

\begin{example}
The area and volume of $H$ are 
$$
A := 2\pi \int_{z_0}^\infty (1+r'^2) r \, dz \qquad \text{ and } \qquad 
V := \pi^2 \int_{z_0}^\infty r^2 \, dz
$$
respectively.
For $r(z) = z^{-\beta}$, we get 
$A = \infty$, $V = \frac{\pi}{2\beta-1} < \infty$
(painter's paradox) for $\beta \in (\frac12,1]$.
This holds specifically for the case $\beta = 1$, i.e., $r = 1/z$, when $H$ is called the trumpet of 
Torricelli (or also Gabriel's horn).
For $\beta > 1$ we have $A, V < \infty$ and for $\beta \leq \frac12$, both $A, V = \infty$.
The Gaussian curvature of such a surface 
$$
 \kappa_G = -\frac{r''}{r(1+r'^2)^2} 
 = -\frac{\beta^2+\beta}{z^2 (1+\beta^2 z^{-2(1+\beta)})^2} \to 0 \qquad \text{ as } z \to \infty.
$$
\end{example}

For the next exposition, see \cite[Section 4C]{Arnold}.
A geodesic $\Gamma$ is the path on $H$ traced out by a unit mass particle moving along $H$ at unit speed
with no external forces other than the holonomic constraints keeping it on $H$.
Assume the geodesic starts at $(\theta_0, z_0) \in \partial H$, making an incoming angle
$\varphi_0 \in [-\frac{\pi}{2}, \frac{\pi}{2}]$
with the inward vertical meridian. Since $\varphi^{\pm}$ are angles with the outward normal vector to $\partial H$,
we have $\varphi_0 = - \varphi^- = \varphi^+$, so that $\kappa(\varphi_0) = \kappa(\varphi^+)$ as used in
Section~\ref{sec:condYT}.
The kinetic energy
\begin{equation}\label{eq:Ekin}
E_{kin} = \frac12 |v|^2 = \frac12((1+r'^2) \dot z^2 + r^2 \dot \theta^2) = \frac12
\end{equation}
is one constant of motion.
Due to the rotational symmetry (using Noether's Theorem), the $z$-component of the angular momentum
\begin{equation}\label{eq:angmom}
r^2 \dot \theta = r_0 |v| \sin \varphi_0 = r_0 \sin \varphi_0
\end{equation}
is the second constant of motion.
Inserting $r \dot \theta = |v| \sin \varphi$ (where $\varphi = \varphi(z)$ is the angle with the inward vertical meridian) 
we get a derived constant of motion (Clairaut's Theorem) 
\begin{equation}\label{eq:Clairaut}
r(z) \sin \varphi(z) = r_0 \sin \varphi_0.
\end{equation}
The absolute value of $\sin \varphi$ is largest at the highest point of the geodesic (where
$\sin \varphi_{\max} = 1$, $r_{\min} = r_0 \sin \varphi_0$, $\theta = \theta_{\max}$ and  $z = z_{\max} = z(r_{\min})$), 
then the geodesic spirals down again (symmetrically to the upwards spiral), 
until it hits $\partial H$ at an angle $-\varphi_0$ with the inward vertical meridian.

The question we pose ourselves is:
\begin{quote}
 What is the time $\tmax$ needed of the geodesic particle to reach the top at $z_{\max}$?
\end{quote}
This has a direct consequence for the tails of geodesic flow if these sojourns inside the horns
are modeled by suspension flow with height function $2 \tmax$ and base map
$R: M^-_j \to M^+_j$ as in \eqref{eq:R}.

\subsection{Computation of $\tmax$ and $\theta_{\max}$}

From \eqref{eq:Ekin} combined with \eqref{eq:angmom} we find
$$
\dot z = \frac{dz}{dt} = \sqrt{ \frac{1-r(z)^2 \dot \theta^2}{1+r'(z)^2} }
= \sqrt{ \frac{1-r_0^2r(z)^{-2} \sin^2 \varphi_0}{1+r'(z)^2} }.
$$
Therefore 
$$
dt = \sqrt{ \frac{1+r'(z)^2 }{1-r_0^2r(z)^{-2} \sin^2 \varphi_0} } \, dz
$$
and
$$
\tmax = \int_0^{\tmax} dt = \int_{z_0}^{z_{\max}} \sqrt{ \frac{1+r'(z)^2}{1-r_0^2r(z)^{-2} \sin^2 \varphi_0} } \, dz.
$$
Using the change of coordinates $u = \frac{r_0}{r(z)} |\sin \varphi_0|$, so
$z(r(u)) = z(\frac{r_0|\sin \varphi_0|}{u})$, $z=z_0 \Leftrightarrow u = |\sin \varphi_0|$,
$z= z_{\max} \Leftrightarrow u = 1$ and
$dz =  - \frac{r_0|\sin \varphi_0|}{u^2} \frac{1}{r'(z(\frac{r_0|\sin \varphi_0|}{u}))}\, du$, 
we find\footnote{Because $r' < 0$, we obtain an extra minus sign when moving $r'$ into the square-root.}
\begin{equation}\label{eq:Tmax}
\tmax = r_0|\sin \varphi_0| 
\int_{|\sin \varphi_0|}^1  \frac{1}{u^2} \cdot 
\sqrt{ \frac{1+r'(z(\frac{r_0|\sin \varphi_0|}{u} ))^{-2}}{1-u^2}   } \, du.
\end{equation}
Now for the difference between the entrance position $\theta_0$ and the position $\theta_{\max}$ 
reached at the top of the geodesic, we find, using \eqref{eq:angmom} and the previous computation for $\dot z$:
\begin{eqnarray*}
 \theta_{\max} - \theta_0 &=& \int_{\theta_0}^{\theta_{\max}} d\theta = \int_0^{\tmax} \dot \theta\, dt =
 \int_0^{\tmax} \frac{r_0 \sin \varphi_0}{r^2} \, dt \\
 &=& \int_{z_0}^{z_{\max}} \frac{r_0 \sin \varphi_0}{r^2} \frac{1}{\dot z}\, dz \\
 &=& r_0 \sin \varphi_0 \int_{z_0}^{z_{\max}} \frac{1}{r^2} \sqrt{\frac{1+r'(z)^2}{1-r_0^2 r(z)^{-2} \sin^2\varphi_0} } \, dz.
\end{eqnarray*}
This should be compared to Formula (5.2) in \cite{BT1} expressing $\Dtheta$ in terms of the potential of a soft scatterer. 
Applying the transformation $u = \frac{r_0}{r(z)} |\sin \varphi_0|$ as before, we get
\begin{equation}\label{eq:disp}
\theta_{\max} - \theta_0 = \sgn(\varphi_0) \int_{|\sin \varphi_0|}^1  
\sqrt{ \frac{ 1+r'(z(\frac{r_0|\sin \varphi_0|}{u}))^{-2}}{1-u^2} } \, du.
\end{equation}
Throughout (and following the notation of \cite{BT1,BT2}) we let
\begin{equation}\label{eq:DTh} 
\Dtheta = 2(\theta_{\max} - \theta_0)
 = 2\sgn(\varphi_0)\int_{|\sin \varphi_0|}^1 \sqrt{ \frac{ 1+r'(z(\frac{r_0|\sin \varphi_0|}{u}))^{-2}}{1-u^2} } \, du.
\end{equation}
be the difference in incoming and outgoing angle of the obstacle as function of angle of incidence $\varphi_0$.
Its derivative w.r.t.\ $\varphi_0$ is 
\begin{eqnarray}\label{eq:kappa}
\kappa(\varphi_0) &=& \frac{\partial \Dtheta(\varphi_0)}{\partial \varphi_0}
= - 2 \sqrt{1+(r'(z_0))^{-2}}\ +\ 2r_0 \cos\varphi_0  \times  \nonumber \\
&& \int_{|\sin \varphi_0|}^1 \frac{ z'(\frac{r_0|\sin \varphi_0|}{u})} 
{\sqrt{1+r'(z(\frac{r_0|\sin \varphi_0|}{u}))^2 } } \ 
\frac{r''(z(\frac{r_0|\sin \varphi_0|}{u}))}{r'(z(\frac{r_0|\sin \varphi_0|}{u}))^2}\frac{1}{u\sqrt{1-u^2}}
\, du.
\end{eqnarray}
For convex obstacles, i.e., with $z' < 0$ and $r'' > 0$, the two terms in this expression have the same sign.
The first term $< -2$, whereas the second varies between $0$ (as $\varphi_0 \to \pm \pi/2$) and
potentially $-\infty$ (as $\varphi_0 \to 0^{\mp}$). 
Therefore, unless $r'(z_0) = -\infty$ (as would be the case for a pseudo-sphere) 
$\kappa(\varphi_0)$ is bounded way from $[-2,0]$ as required in \cite{BT2} to obtain uniform hyperbolicity.

\subsection{Torricelli's trumpets}\label{sec:torri}
Assume that the horn is the surface of revolution of the curve $r(z) = z^{-\beta}$,
with $r'(z) = -\beta z^{-(1+\beta)}$,  $r''(z) = \beta(1+\beta) z^{-(2+\beta)}$
and $z(r) = r^{-1/\beta}$.
Inserting the equations for $r(z)$ into \eqref{eq:Tmax} gives
$r'(\frac{r_0 |\sin \varphi_0|}{u}) = \beta^{-2} z(\frac{r_0 |\sin \varphi_0|}{u})^{2(1+\beta)}
= \beta^{-2} z(\frac{u}{r_0 |\sin \varphi_0|})^{2(1+\beta)/\beta}$ and
$$
\tmax = |\sin \varphi_0|^{-\frac{1}{\beta}} 
\underbrace{ \beta^{-1} r_0^{-\frac{1}{\beta}}  \int_{|\sin \varphi_0|}^1  \frac{1}{u^2} \cdot 
\sqrt{ \frac{\beta^2 (r_0\, |\sin \varphi_0|)^{\frac{2(1+\beta)}{\beta}} + u^{\frac{2(1+\beta)}{\beta}} }
{1-u^2} } \, du  }_{I(\varphi_0)} .
$$
The integral $I(\varphi_0)$ tends to a positive constant 
$I_0 = \beta^{-1} r_0^{-\frac{1}{\beta} } \int_0^{\frac{\pi}{2}} (\sin \alpha)^\frac{\beta-1}{\beta}\, d\alpha$ as 
$\varphi_0 \to 0$, so the leading asymptotics of $\tmax$ is 
$|\sin \varphi_0|^{-1/\beta} I_0$.
This gives tails on the height function,
$$
\mu( (\theta,\varphi_0) : 2\tmax > t)
= 2\pi \, \mu\left( |\sin \varphi_0| < (\frac{t}{2r_0I(\varphi_0)})^{-\beta} \right) \sim 4\pi (\frac{t}{2r_0I_0})^{-\beta}.
$$
Applying the same formulas to \eqref{eq:disp}, we get
$$
\Dtheta(\varphi_0) = \frac{ 2\sgn(\varphi_0) }{|\sin \varphi_0|^{\frac{1+\beta}{\beta}} } 
\underbrace{ \frac{1}{\beta r_0^{\frac{1+\beta}{\beta}} } \int_{|\sin \varphi_0|}^1
\sqrt{ \frac{\beta^2 (r_0\, |\sin \varphi_0|)^{\frac{2(1+\beta)}{\beta}} + u^{\frac{2(1+\beta)}{\beta}} }
{1-u^2} } \, du  }_{J(\varphi_0)},
$$
and $J(\varphi_0) \to \beta^{-1} r_0^{-\frac{1+\beta}{\beta}} 
\int_0^{\frac{\pi}{2}} (\sin \alpha)^{\frac{1+\beta}{\beta}} \,d\alpha$
as $\varphi_0 \to 0$.
Therefore the reflection map $R:M^-_j \to M^+_j$  becomes
$$
R:(\theta^-,\varphi^-) \mapsto (\theta^-+2\sgn(\varphi_0) |\sin \varphi_0|^{-\frac{1+\beta}{\beta}} J(\varphi_0),\ 
-\varphi^-), \qquad \varphi_0 = - \varphi^-.
$$
Since $J(\varphi_0) \to 0$ as $\varphi_0 \to \pm \frac{\pi}{2}$ 
we get $R(\theta,\varphi^-) \to (\theta,\mp \frac{\pi}{2})$ as $\varphi^- \to \pm \frac{\pi}{2}$.
Inserting the above into \eqref{eq:kappa}, we find
\begin{eqnarray*}
 \kappa(\varphi_0) &=& - 2 \sgn(\varphi_0) \sqrt{1+\beta^{-2} r_0^{-2(1+\beta)/\beta} } 
 - \frac{1+\beta}{\beta^2}  \frac{ 2 \sgn(\varphi_0) |\sec \varphi_0| }{( r_0 | \sin \varphi_0| )^{(1+\beta)/\beta} } \times \\
 && \int_{|\sin \varphi_0|}^1 \frac{1}
  {\sqrt{u^{2(1+\beta)/\beta}+\beta^2 (r_0 |\sin \varphi_0|)^{2(1+\beta)/\beta} } } \,
  \frac{ u^{2(1+\beta)/\beta} } {\sqrt{1-u^2}}  \, du.
\end{eqnarray*}
As $\varphi_0$ increases from $0$ to $\pi/2$, $\kappa(\varphi_0)$ increases from $-\infty$ to
$-2 \sqrt{1+\beta^{-2} r_0^{-2(1+\beta)/\beta} }$, and it is smooth with a finite
limit as $\varphi_0 \to \pm\frac{\pi}{2}$, giving the H\"olderness of $\Dtheta$ away from $\varphi_0 = 0$.
Also $\kappa([-\frac{\pi}{2}, \frac{\pi}{2}]) \cap [-2,0] = \emptyset$, so that hyperbolicity is guaranteed.

The leading term of $\kappa(\varphi_0)$ is $C |\varphi_0|^{-\frac{1+2\beta}{\beta}}$ for some $C > 0$,
so, since $\kappa$ is a smooth function of $\varphi_0\neq 0$,
the leading term of $\kappa'(\varphi_0)$ in absolute value is
$$
\frac{1+2\beta}{\beta} C |\varphi_0|^{-\frac{1+3\beta}{\beta} } \leq C |\varphi_0|^{-3 \frac{1+2\beta}{\beta}}
= O(|2+\kappa(\varphi_0)|^3) \qquad \text{ as } \varphi_0 \to 0,
$$
whenever $\beta > -2/3$. Hence, for every $\beta> 0$, item 5.\ in Section~\ref{sec:condYT} holds.
By the same token, recalling that $\omega(\varphi_0) = \frac{2+\kappa(\varphi_0)}{\cos\varphi_0}$
(see item 2.\ in Section~\ref{sec:condYT}),
$$
\omega'(\varphi_0) := \frac{\kappa'(\varphi_0) + (2+\kappa(\varphi_0)) \tan \varphi_0}{\cos \varphi_0} \quad 
\text{ is bounded away from } 0,
$$
for $\varphi_0$ close to $0$. Therefore $\omega(\varphi_0)$ is monotone in one-sided 
neighborhoods of $\{\varphi_0 = 0\}$, and item 6.\ in Section~\ref{sec:condYT} holds.

\newpage
\section{Answers to the referee on revised version}

Once more I would like to express my thanks fr the careful reading of the revised version.
Most changes were straightforward but I list them lest no item gets forgotten.

\begin{description}
\item Page 1: The geodesic flows (reference added) is where $\beta = 1$ and $\beta = 2$ are both possible. In infinite horizon Lorentz gases, $\beta = 2$. I got mixed up in references here.
\item Page 1, line -10: references updated.
 \item Page 2: ``they reflect'' instead of ``it does reflect''
 \item Page 3, footnote: spelling incoming corrected.
 \item Page 3, line -5: $\theta^-$ corrected to $\theta^+$.
 \item Page 4, line 11: opening bracket inserted before ``in fact''
 \item Page 5, 1st line after item 6: ``een'' corrected to ``even''
 \item Page 5, 3rd line after ad.\ 3: Sentence rewritten.
 \item Page 6, rephrased the exposition a bit, making a clear distinction between two cases (i) and (ii). The arc $A_j$ is defined for both (but slightly different) but Figure (ii) only refers to (ii).
 \item Page 7, Figure 2: lettering corrected.
 \item Page 8, Formula (5): Missing factor $-1/\cos \varphi^+_j$ inserted. Also a superscript $+$ in one of the matrix entries.
 \item Page 8, Displayed formula: $-\kappa$ corrected to $+\kappa$.
 \item Page 9, line 13: Wrong variable $s$ replaced by $\varphi$.
 \item Page 12, Theorem 3.1.: $\mbox{supp}(v)$ contains a horn 
 replaced by $\inf_{x \in H_i}|v(x)| > 0$. I hope this explains what was meant. 
\end{description}

\end{document}